\documentclass[12pt]{amsart}
\usepackage{amsthm, amssymb}
\usepackage{amsfonts, amscd}
\usepackage{epsfig,multicol}
\theoremstyle{plain} \numberwithin{equation}{section}
\newtheorem{theo}{Theorem}[section]
\newtheorem{coro}[theo]{Corollary}
\newtheorem{prop}[theo]{Proposition}
\newtheorem{lemm}[theo]{Lemma}

\theoremstyle{definition}
\newtheorem*{defi}{Definition}
\newtheorem*{exam}{Example}
\newtheorem*{rema}{Remark}

\def\Z{\mathbb Z}
\def\C{\mathbb C}
\def\R{\mathbb R}

\def\G{\Gamma}

\def\G0{G^0}
\def\NGT{N_G(T)}
\def\NG0T{N_{\G0}(T)}

\def\g{\mathfrak g}
\def\V{\mathcal{V}}
\def\VC{\V_{\C}}
\def\RG{\Delta(G)}
\def\rhog{{\rho_g}}
\def\rhoP{\mathcal D}
\def\RM{R(P)}

\DeclareMathOperator{\Aut}{Aut}

\DeclareMathOperator{\Ker}{Ker}

\DeclareMathOperator{\U}{U}

\DeclareMathOperator{\Hom}{Hom}

\DeclareMathOperator{\Fix}{Fix}
\DeclareMathOperator{\Sympl}{Symp}

\begin{document}
\title{Symmetry of a symplectic toric manifold}
\author[M. Masuda]{Mikiya Masuda}
\address{Department of Mathematics, Osaka City University, Sumiyoshi-ku, Osaka 558-8585, Japan.}
\email{masuda@sci.osaka-cu.ac.jp}

\date{\today}
\thanks{The author was partially supported by Grant-in-Aid for Scientific Research 19204007}
\subjclass[2000]{Primary 53D20, 57S15; Secondary 14M25}
\keywords{symplectic toric manifold, moment polytope, root system.}

\begin{abstract}
The action of a torus group $T$ on a symplectic toric manifold $(M,\omega)$ often extends to an effective action of a 
(non-abelian) compact Lie group $G$.  We may think of $T$ and $G$ as compact Lie subgroups of 
the symplectomorphism group $\Sympl(M,\omega)$ of $(M,\omega)$.  
On the other hand, $(M,\omega)$ is determined by the associated moment polytope $P$ by the result of Delzant \cite{delz88}.  
Therefore, the group $G$ should be estimated in terms of $P$ or we may say that a \emph{maximal} compact Lie subgroup of 
$\Sympl(M,\omega)$ containing the torus $T$ should be described in terms of $P$. 

In this paper, we introduce a root system $R(P)$ associated to $P$ 
and prove that any irreducible subsystem of $R(P)$ is of type A and 
the root system $\Delta(G)$ of the group $G$ is a subsystem of $R(P)$ 
(so that $R(P)$ gives an upper bound for the identity component of $G$ 
and any irreducible factor of $\Delta(G)$ is of type A). 
We also introduce a homomorphism $\rhoP$ from the normalizer $\NGT$ of $T$ in $G$ to 
an automorphism group $\Aut(P)$ of $P$, which detects the connected components of $G$.  
Finally we find a maximal compact Lie subgroup $G_{\max}$ of $\Sympl(M,\omega)$ containing 
the torus $T$. 
\end{abstract}

\maketitle 

\section{Introduction}

A \emph{symplectic toric manifold} is a compact connected symplectic manifold $(M,\omega)$ with an effective 
Hamiltonian action of a torus group $T$ of half the dimension of the manifold $M$. 
%We set $n=\dim T=1/2\dim M$.    
Delzant \cite{delz88} proves that $M$ is equivariantly diffeomorphic to a smooth projective toric variety with 
the restricted $T$-action.  
Moreover he classifies symplectic toric manifolds by showing that the correspondence 
from symplectic toric manifolds to their moment polytopes 
%called \emph{Delzant polytopes} in \cite{guil94} 
is one-to-one.  Therefore, all geometrical information on $(M,\omega)$ is 
encoded in the moment polytope $P$ associated with $(M,\omega)$.  

The $T$-action on $(M,\omega)$ often extends to an effective action of a (non-abelian) compact 
Lie group $G$.  We may think of $T$ and $G$ as compact Lie subgroups of the symplectomorphism 
group $\Sympl(M,\omega)$ of $(M,\omega)$.  Since the $T$-fixed point set in $M$ is non-empty and $\dim T=\frac{1}{2}\dim M$, 
there is no torus subgroup of $\Sympl(M,\omega)$ containing $T$ properly.  
This means that $T$ is a maximal torus of $G$. 

In this paper, we introduce a root system $\RM$ associated to the moment polytope $P$ 
and a homomorphism 
\begin{equation} \label{m1}
\rhoP\colon \NGT\to \Aut(P), 
\end{equation}
where $\NGT$ denotes the normalizer of $T$ in $G$ 
and $\Aut(P)$ denotes an automorphism group of $P$. 
It turns out that the root system $\RM$ gives information on the identity component $\G0$ of 
$G$ and the homomorphism $\rhoP$ induces an injective homomorphism
\[
G/\G0\cong \NGT/\NG0T\to \Aut(P)/\rhoP(\NG0T)
\]
so that $\rhoP$ detects the connected components of $G$.    
Here is a summary of our results. 

\begin{theo} \label{main}
Let $(M,\omega)$ be a symplectic toric manifold with a Hamiltonian action of a torus $T$ 
of half the dimension of the dimension of $M$ and let $P$ be the associated moment polytope. 
Then the following hold. 
\begin{enumerate}
\item Any irreducible subsystem of $\RM$ is of type A.  
\item If $G$ is a compact Lie subgroup of $\Sympl(M,\omega)$ containing the torus $T$, 
then the root system $\Delta(G)$ of $G$ is a subsystem of $\RM$,  
so that any irreducible factor of $\Delta(G)$ is of type A by (1) above. 
\item Let $G$ be as in (2) above.  If $\Delta(G)=\RM$ and the homomorphism $\rhoP$ in \eqref{m1} is surjective, then 
$G$ is maximal among compact Lie subgroups of $\Sympl(M,\omega)$, i.e, 
$G$ is not properly contained in a compact Lie subgroup of $\Sympl(M,\omega)$. 
\item There exists a compact Lie subgroup $G_{\max}$ of $\Sympl(M,\omega)$ containing the torus $T$ 
such that the assumption in (3) above is satisfied. 
\end{enumerate}
\end{theo}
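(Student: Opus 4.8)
The plan is to build $G_{\max}$ by hand and then invoke part (3): once I exhibit a compact Lie subgroup of $\Sympl(M,\omega)$ that contains $T$, has root system equal to $\RM$, and whose normalizer of $T$ surjects onto $\Aut(P)$ under $\rhoP$, the maximality demanded in (4) is automatic. So everything reduces to a construction. By Delzant's theorem I regard $(M,\omega)$ as a smooth projective toric variety with its natural complex structure $J$, with respect to which $\omega$ is a $T$-invariant K\"ahler form. Let $\MA$ be the group of holomorphic automorphisms of $(M,J)$; by Demazure's theory $\MA$ is a linear algebraic group, the complexification $T_\C$ is a maximal torus of its identity component $\Auto(M,J)$, and the roots of $\Auto(M,J)$ relative to $T_\C$ are the Demazure roots of the fan of $M$.

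For the identity component I would take $K_0$ to be a maximal compact subgroup of $\Auto(M,J)$ containing $T$. Since $\Auto(M,J)$ need not be reductive, $K_0$ is a maximal compact subgroup of its reductive Levi factor, and hence $K_0$ is connected with maximal torus $T$ and root system equal to the set of those Demazure roots $\alpha$ for which $-\alpha$ is again a Demazure root. The crucial identification is that this symmetric set of Demazure roots is exactly the combinatorially defined $\RM$; granting it, $\Delta(K_0)=\RM$, and by part (1) the group $K_0$ is, up to a finite cover, a product of special unitary groups $\SU(n_i)$ with a torus.

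For the component group I would enlarge $K_0$ to a maximal compact subgroup $K$ of the full group $\MA$ with $K\cap\Auto(M,J)=K_0$. Every element of $\Aut(P)$ is a lattice-affine symmetry of $P$, hence an automorphism of the fan of $M$, and therefore is induced by a holomorphic automorphism normalizing $T_\C$; these automorphisms lie in $\MA$, normalize $T$, and realize all of $\Aut(P)$ under $\rhoP$, so $K/K_0$ surjects onto $\Aut(P)$. Because the elements of $\Aut(P)$ preserve the polarization and hence fix the class $[\omega]\in H^2(M;\R)$, averaging any K\"ahler form in this class over the compact group $K$ produces a $K$-invariant (in particular $T$-invariant) K\"ahler form $\omega'$ with $[\omega']=[\omega]$. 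The straight-line family $t\omega'+(1-t)\omega$ then consists of cohomologous $T$-invariant K\"ahler forms, so the $T$-equivariant Moser argument yields a $T$-equivariant diffeomorphism $\phi$ with $\phi^*\omega'=\omega$. Setting $G_{\max}:=\phi^{-1}K\phi$, a direct check gives $g^*\omega=\omega$ for every $g\in G_{\max}$, so $G_{\max}\subset\Sympl(M,\omega)$; it still contains $T=\phi^{-1}T\phi$, its identity component satisfies $\Delta(G_{\max})=\RM$, and $\rhoP(\NGT)=\Aut(P)$ — exactly the hypotheses of part (3).

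The main obstacle is the identification of the combinatorial data with the geometry: I must prove that the symmetric Demazure roots of the toric variety coincide with $\RM$ and that the component group of $\MA$ maps onto $\Aut(P)$ via $\rhoP$, i.e. that $\RM$ and $\Aut(P)$, as defined from $P$, faithfully reproduce the semisimple automorphisms and the polytope symmetries of $M$. A subsidiary point requiring care is the non-reductivity of $\Auto(M,J)$, which forces the passage to a maximal compact subgroup (equivalently the reductive Levi factor) before reading off the root system; one must also verify that every element of $\Aut(P)$ fixes the K\"ahler class $[\omega]$, so that the averaged form $\omega'$ can be taken $K$-invariant and the whole of $K$, not merely its identity component, is conjugated into $\Sympl(M,\omega)$.
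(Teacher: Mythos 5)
Your proposal treats the theorem as if it consisted only of part (4), and this is the essential gap. Parts (1), (2) and (3) are substantive claims that you never prove, yet your argument leans on all three: you ``invoke part (3)'' to get maximality, you cite part (1) to identify $K_0$ as a product of special unitary groups, and part (3) itself cannot be proved without part (2). Concretely, the paper's proof of (3) runs as follows: if $H\supset G\supset T$ is a larger compact subgroup, then $\Delta(H)$ is again a subsystem of $\RM$ \emph{by part (2)}, so $\Delta(G)=\RM$ forces $\Delta(G)=\Delta(H)$ and hence $\G0=H^0$; then one needs the fact that $\Ker\rhoP=T$ (the paper's Lemma~\ref{lemm2}, proved by a fixed-point/linearization argument using a $G$-invariant metric) together with surjectivity of $\rhoP$ to conclude $\NGT=N_H(T)$ and hence $G=H$ via $\NGT/\NG0T\cong G/\G0$. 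Part (2) in turn rests on Wiemeler's lemma and the analysis of how a Weyl reflection permutes the characteristic submanifolds (Lemmas~\ref{W} and~\ref{3}), and part (1) is a combinatorial argument with the Cartan matrix of $\RM$ (symmetric with diagonal $2$, hence simply laced, then an explicit contradiction rules out types D and E). None of this appears in your proposal, so even granting your construction of $G_{\max}$, the theorem is not proved.

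For part (4) itself your route is genuinely different from the paper's and is workable in principle, but you have deferred exactly the hard step. The paper never passes through the holomorphic automorphism group: it builds $\tilde G\subset \U(m)$ directly as a product of unitary blocks indexed by the irreducible factors of $\RM$ acting on the moment-angle manifold $\mathcal{Z}_P$ (Proposition~\ref{mconn}), and realizes $\Aut(P)$ by coordinate permutations $F_\rho$ of $\C^m$ (Lemma~\ref{barFrho}); these are unitary, so they are symplectomorphisms of $M$ with no Moser argument needed, and the equalities $\Delta(\tilde G/\Ker\V)=\RM$ and $\rhoP(\bar F_\rho)=\rho$ are verified by direct computation. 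Your approach instead requires (i) the identification of the symmetric Demazure roots with $\RM$, (ii) the surjection of $\pi_0(\MA)$ onto $\Aut(P)$ compatibly with $\rhoP$, and (iii) the verification that elements of $\Aut(P)$ fix $[\omega]$ (which is where the constants $a_i$ and the relation $a_{\sigma(i)}=a_i-\langle u_0,v_{\sigma(i)}\rangle$ must enter, since $\Aut(P)$, unlike $\RM$, depends on the $a_i$). You name (i) and (ii) as ``the main obstacle'' without supplying them, so the construction is not complete as written.
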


\begin{rema}
By (3) above, the group $G_{\max}$ in (4) above, which contains the torus $T$, 
is maximal among compact Lie subgroups of $\Sympl(M,\omega)$.  
However, the author does not know whether any maximal compact 
Lie subgroup of $\Sympl(M,\omega)$ containing $T$ is conjugate to $G_{\max}$ in $\Sympl(M,\omega)$, 
where the torus $T$ is fixed.  Related to this question, it is proved in \cite{ka-ke-pi06} that when $\dim M=4$, 
the number of conjugacy classes of 2-dimensional tori in $\Sympl(M,\omega)$ is finite. 
\end{rema} 

Our work is motivated by the work of Demazure \cite{dema70} (see also \cite{cox95} or \cite[Section 3.5]{oda88}). 
He introduces a root system $R(\Delta)$ for a complete non-singular fan $\Delta$ and proves that 
it agrees with the root system of the automorphism group $\Aut(X(\Delta))$ of the compact smooth toric variety 
$X(\Delta)$ associated with $\Delta$, where $\Aut(X(\Delta))$ is known to be an algebraic group, and that 
$R_s(\Delta):=R(\Delta) \cap (-R(\Delta))$ is the root system of the reductive (or semisimple) part of $\Aut(X(\Delta))$.  
The symplectic toric manifold $(M,\omega)$ is equivariantly diffeomorphic to a 
smooth projective toric variety $X$ with the restricted $T$-action as mentioned before 
and the fan $\Delta_X$ of the $X$ is the so-called normal fan derived from the moment polytope $P$ associated with $(M,\omega)$, 
where normal vectors $v_i$'s to facets of $P$ are edge vectors in the fan $\Delta_X$.  One sees that our root system $R(P)$ agrees  
with $R_s(\Delta_X)$.  Demazure also describes the connected components of $\Aut(X(\Delta))$ in terms of 
the automorphism group $\Aut(\Delta)$ of the fan $\Delta$.  The automorphism group $\Aut(P)$ of $P$ 
can be regarded as a counterpart to 
$\Aut(\Delta)$, in fact, $\Aut(P)$ can be regarded as a subgroup of $\Aut(\Delta)$.  
We remark that the root systems $R(P)$ and $R(\Delta)$ depend only on the vectors $v_i$'s 
but $\Aut(P)$ and $\Aut(\Delta)$ are not determined by the vectors.  
  
This paper is organized as follows.  In Section~\ref{sect:1} we review an explicit construction 
(called the \emph{Delzant construction} in \cite{guil94}) of a symplectic toric manifold $(M,\omega)$ 
with moment polytope $P$.  
In Section~\ref{sect:2} we rewrite the construction in terms of equivariant (co)homology and also 
recall some facts on the equivariant cohomology of $M$.   
In Section~\ref{sect:3} we make some observations on roots of a compact Lie subgroup $G$ of $\Sympl(M,\omega)$ 
containing the torus $T$.  Based on the observations, we introduce the root system $R(P)$ and prove 
the assertions (1) and (2) in Theorem~\ref{main} (see Proposition~\ref{root}, Theorem~\ref{typeA} and Corollary~\ref{GtypeA}).  
In Section~\ref{sect:5} we find a \emph{connected} compact Lie subgroup $G$ of $\Sympl(M,\omega)$ 
which attains the equality $\Delta(G)=R(P)$.   
In Section~\ref{sect:6} we introduce the homomorphism $\rhoP$ in \eqref{m1} 
for an arbitrary subgroup $G$ of $\Sympl(M,\omega)$ containing $T$ and 
observe that $\rhoP$ detects the connected components of $G$ when $G$ is a compact Lie group.  
%which detects the connected components of $G$.  
The assertions (3) and (4) in Theorem~\ref{main} 
are proved in Section~\ref{sect:7} (see Theorem~\ref{Gmax}).  

Throughout this paper, $(M,\omega)$ will denote a symplectic toric manifold with moment polytope $P$, where 
a Hamiltonian $T$-action on $(M,\omega)$ is incorporated although it is often not mentioned explicitly.  
%and $T$ will denote a torus group of dimension $n$ acting on $(M,\omega)$.  
%We also denote by $\Sympl(M,\omega)$ the group of symplectomorphisms of $(M,\omega)$. 
%The torus $T$ is a subgroup of $\Sympl(M,\omega)$ and since the $T$-fixed point set in $M$ is isolated, 
%there is no torus subgroup of $\Sympl(M,\omega)$ containing $T$ properly.  
The argument developed in this paper works for \emph{torus manifolds} introduced in \cite{ha-ma03} 
with some modification.  We will discuss this in a forthcoming paper.

\section{Delzant construction} \label{sect:1}

By the result of Delzant mentioned in the Introduction, a symplectic toric manifold $(M,\omega)$ is 
determined by the associated moment polytope $P$ and is explicitly constructed from $P$.  
We will review the construction in this section. The details can be found in \cite{guil94}. 

Let $\mu\colon M\to \mathfrak t^*$ be a moment map associated with $(M,\omega)$ so that $\mu(M)=P$, 
where $\mathfrak t^*$ is the dual of the Lie algebra $\mathfrak t$ of $T$. 
The moment map $\mu$ is uniquely determined by $(M,\omega)$ up to parallel translations in $\mathfrak t^*$. 
We identify $\mathfrak t$ with $\R^n$ and 
$\mathfrak t^*$ with $(\R^n)^*$ and express  
\begin{equation} \label{P}
P=\{ u\in (\R^n)^*\mid \langle u,v_i\rangle\ge a_i\quad(i=1,\dots,m)\}
\end{equation}
where $v_i\in\Z^n$ is primitive, $\langle\ ,\ \rangle$ is a natural pairing (i.e. evaluation) and $a_i\in \R$.  
Without loss of generality we may assume that there is no redundant inequality in \eqref{P} 
so that the intersection of $P$ with the hyperplane defined by $\langle u,v_i\rangle=a_i$ is a facet (i.e. codimension 1 face) of $P$ 
for each $i$, which we denote by $P_i$.  So there are exactly $m$ facets in $P$.  
The moment polytope $P$ is \emph{non-singular}, which means that $P$ is simple and whenever $n$ facets of $P$ meet at a vertex, 
the $n$ vectors $v_i$'s normal to the $n$ facets form a basis of $\Z^n$.  
A non-singular polytope is called a \emph{Delzant polytope} in \cite{guil94}. 

Let $e_1,\dots,e_m$ be the standard basis of $\Z^m$ and consider the linear map 
$\pi_*\colon \Z^m\to \Z^n$ sending $e_i$ to $v_i$ for $i=1,\dots,m$.  Since $P$ is 
non-singular, $\pi_*$ is surjective and we have an exact sequence
\begin{equation} \label{exactV}
0\to \Ker \pi_*\stackrel{\iota_*}\longrightarrow \Z^m\stackrel{\pi_*}\longrightarrow\Z^n\to 0
\end{equation}
where $\iota_*$ is the inclusion.  Taking the dual of this sequence, we obtain an exact sequence  
\begin{equation} \label{exactV*}
0\leftarrow (\Ker \pi_*)^*\stackrel{\iota^*}\longleftarrow (\Z^m)^*\stackrel{\pi^*}\longleftarrow
(\Z^n)^*\leftarrow 0
\end{equation}
and one easily sees that 
\begin{equation} \label{V}
\pi^*(u)=\sum_{i=1}^m\langle u,v_i\rangle e_i^* \quad\text{for any $u\in (\Z^n)^*$} 
\end{equation}
where $e_1^*,\dots,e_m^*$ denote the dual basis of $e_1,\dots,e_m$. 

The map $\pi_*$ (resp. $\pi^*$) extends to a linear map from $\R^m$ onto $\R^n$ 
 (resp. from $(\R^n)^*$ to $(\R^m)^*$) and we use the same notation for the extended map.  
We  define $\pi_a^*\colon (\R^n)^*\to (\R^m)^*$ by 
\[
\pi_a^*(u):=\pi^*(u)-\sum_{i=1}^ma_ie_i^*=\sum_{i=1}^m(\langle u,v_i\rangle-a_i)e_i^*.
\]
The map $\pi_a^*$ embeds $P$ into the positive orthant of $(\R^m)^*$.  The fiber product of 
$\pi_a^*$ and the (moment) map 
\begin{equation} \label{momPhi}
\Phi\colon \C^m\to (\R^m)^*
\end{equation}
sending $z=(z_1,\dots,z_m)$ to $\frac{1}{2}\sum_{i=1}^m|z_i|^2e_i^*$  
is 
\begin{equation} \label{fprod}
\{ (z,u)\in \C^m\times (\R^n)^*\mid \frac{1}{2}\sum_{i=1}^m|z_i|^2e_i^*=
\pi_a^*(u)\}.
\end{equation}
Since $\pi_a^*$ is injective and $\iota^*(\pi_a^*(u))=-\sum_{i=1}^ma_i\iota^*(e_i^*)$ for 
any $u$, the first projection from $\C^m\times (\R^n)^*$ onto $\C^m$ 
maps the fiber product \eqref{fprod} diffeomorphically onto 
\begin{equation} \label{ZP}
\mathcal{Z}_P:=\{ z\in\C^m\mid \sum_{i=1}^m(\frac{1}{2}|z_i|^2+a_i)\iota^*(e_i^*)=0\}.
\end{equation}
Note that 
\begin{equation} \label{ZPpull}
\mathcal{Z}_P=\Phi^{-1}(\pi_a^*(P)).
\end{equation} 

\begin{rema}
The manifold $\mathcal{Z}_P$ is called the \emph{moment-angle manifold} of $P$ 
and its topology is intensively studied in \cite{bu-pa02}.  It is also studied 
in \cite{bo-me06} from the viewpoint of real algebraic geometry.  
\end{rema}

We identify $\R/\Z$ with the unit circle $S^1$ of the complex numbers $\C$ through 
the exponential map $x\to \exp(2\pi\sqrt{-1}x)$ and set 
\[
T:=(S^1)^n. 
\]
The map $\pi_*$ in \eqref{exactV} induces an epimorphism 
$$\V\colon (S^1)^m\to T$$ 
and we make an identification 
\begin{equation} \label{Vident}
(S^1)^m/\Ker\V=T
\end{equation}
through the map $\V$. 
An element $c=(c_1,\dots,c_n)\in \Z^n$ defines a homomorphism 
\begin{equation} \label{lambda}
\lambda_c\colon S^1\to T
\end{equation}
sending $g$ to $(g^{c_1},\dots,g^{c_n})$ and we note that 
\begin{equation} \label{Vlambda}
\V(g_1,\dots,g_m)=\prod_{i=1}^m\lambda_{v_i}(g_i) \quad\text{for $(g_1,\dots,g_m)\in (S^1)^m$}.
\end{equation}

An element $b=(b_1,\dots,b_n)$ in  $\Z^n$ also defines a homomorphism 
\[
\chi^b\colon T\to S^1
\]
sending $(h_1,\dots,h_n)$ to $\prod_{i=1}^nh_i^{b_i}$.  Then we have 
\begin{equation} \label{comp}
(\chi^b\circ \lambda_c)(g)=g^{\langle b,c\rangle}\quad \text{for $g\in S^1$}
\end{equation} 
where $\langle b ,c \rangle=\sum_{i=1}^n b_ic_i$.  
Since the intersection of the kernels of $\chi^b\colon T\to S^1$ for all $b$ 
is trivial, it follows from \eqref{Vlambda} and \eqref{comp} that 
\begin{equation} \label{KerV}
\Ker\V=\{(g_1,\dots,g_m)\in (S^1)^m\mid \prod_{i=1}^mg_i^{\langle u,v_i\rangle}=1
\text{ for $\forall u\in \Z^n$}\}.
\end{equation}

\begin{rema}
The elements $b$ and $u$ above are taken from $\Z^n$ but we will see that  
it would be better to regard them as  elements of $(\Z^n)^*$ through the product $\langle\ ,\ \rangle$.  
\end{rema}

The group $(S^1)^m$ acts on $\C^m$ by componentwise multiplication and this action leaves 
$\mathcal{Z}_P$ invariant.  
The map $\Phi$ in \eqref{momPhi} induces a homeomorphism 
from the quotient $\C^m/(S^1)^m$ onto the positive orthant of $(\R^m)^*$ and 
%\begin{equation*} \label{PhiZP}
$\Phi(\mathcal{Z}_P)=\pi_a^*(P)$.
%\end{equation*} 
The action of $(S^1)^m$ restricted to  $\Ker\V$ is free on 
$\mathcal{Z}_P$ and the quotient $\mathcal{Z}_P/\Ker\V$ is known to be the given $M$.  
The standard symplectic form 
$$\omega_0:=\frac{\sqrt{-1}}{2}\sum_{i=1}^m dz_i\wedge d\bar z_i$$ 
on $\C^m$ is invariant under the linear action of the unitary group $\U(m)$. 
The form $\omega_0$ descends to the given $\omega$ on $M$.  In fact, if 
\begin{equation*} \label{q}
q\colon \mathcal{Z}_P\to M=\mathcal{Z}_P/\Ker\V
\end{equation*}
denotes the quotient map, then $\omega$ satisfies 
\begin{equation} \label{omega0}
\omega_0|_{\mathcal{Z}_P}=q^*(\omega)
\end{equation}
and is uniquely determined by this identity, where the left-hand side denotes the restriction of $\omega_0$ to $\mathcal{Z}_P$. 
The action of $(S^1)^m$ on $\mathcal{Z}_P$ induces an action of $T=(S^1)^m/\Ker\V$ on 
$M=\mathcal{Z}_P/\Ker\V$ and this $T$-action on $M$ preserves the symplectic form $\omega$.   

As remarked before, the equation $\langle u,v_i\rangle=a_i$ defines the facet $P_i$ of $P$ for each $i=1,\dots,m$, and  
\[
\mathcal{Z}_{P_i}:=\Phi^{-1}(\pi_a^*(P_i))\quad\text{and}\quad M_i:=q(\mathcal{Z}_{P_i})
\]
are respectively closed smooth submanifolds of $\mathcal{Z}_P$ and $M$ of real codimension 2.  
We call $M_i$'s the \emph{characteristic submanifolds} of $M$. 
We see from \eqref{ZP} or \eqref{ZPpull} that 
$$\mathcal{Z}_{P_i}=\mathcal{Z}_P\cap \{z_i=0\}
$$  
and hence it follows from \eqref{lambda} and \eqref{Vlambda} that 

\begin{lemm} \label{vi}
The characteristic submanifold $M_i$ is fixed pointwise by the $S^1$-subgroup $\lambda_{v_i}(S^1)$ of $T$ for each $i$.
\end{lemm}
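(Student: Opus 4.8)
The plan is to trace $\lambda_{v_i}(g)$ back through the Delzant quotient to an element of $(S^1)^m$ that acts trivially on $\mathcal{Z}_{P_i}$. First, for a fixed $g\in S^1$, I would single out the element $g_{(i)}:=(1,\dots,1,g,1,\dots,1)\in (S^1)^m$ having $g$ in the $i$-th coordinate and $1$ elsewhere. Evaluating $\V$ on it via \eqref{Vlambda}, every factor with $j\neq i$ equals $\lambda_{v_j}(1)=1$, so that $\V(g_{(i)})=\lambda_{v_i}(g)$. Hence $g_{(i)}$ is a lift of $\lambda_{v_i}(g)$ under the projection $\V\colon (S^1)^m\to (S^1)^m/\Ker\V=T$ of \eqref{Vident}.

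Next I would use that the $T$-action on $M=\mathcal{Z}_P/\Ker\V$ is the one induced from the componentwise $(S^1)^m$-action on $\mathcal{Z}_P$: for $z\in \mathcal{Z}_P$ one has $\lambda_{v_i}(g)\cdot q(z)=q(g_{(i)}\cdot z)$, which is independent of the chosen lift because $(S^1)^m$ is abelian, so that the residual action of $T=(S^1)^m/\Ker\V$ on $M$ is well defined. Now $g_{(i)}$ acts on $\C^m$ by multiplying only the $i$-th coordinate by $g$, so it fixes every point $z$ with $z_i=0$.

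Finally, recalling $\mathcal{Z}_{P_i}=\mathcal{Z}_P\cap\{z_i=0\}$ and $M_i=q(\mathcal{Z}_{P_i})$, every point of $M_i$ is of the form $q(z)$ with $z\in \mathcal{Z}_P$ and $z_i=0$; hence $\lambda_{v_i}(g)\cdot q(z)=q(g_{(i)}\cdot z)=q(z)$ for every $g\in S^1$, which shows that $M_i$ is fixed pointwise by $\lambda_{v_i}(S^1)$.

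This argument is essentially bookkeeping through the quotient construction, so I do not anticipate a genuine obstacle. The only point requiring a little care is verifying that the descended $T$-action may be computed through any lift, but that is guaranteed by the identification \eqref{Vident} together with the fact that $(S^1)^m$ is abelian.
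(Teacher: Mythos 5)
Your argument is correct and is exactly the one the paper intends: the paper gives no formal proof, merely noting that the lemma "follows from \eqref{lambda} and \eqref{Vlambda}" once one knows $\mathcal{Z}_{P_i}=\mathcal{Z}_P\cap\{z_i=0\}$, which is precisely your lift-to-the-$i$-th-coordinate bookkeeping. Nothing is missing.
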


\section{Equivariant cohomology} \label{sect:2}

It is more convenient and natural to interpret the Delzant construction  
in terms of equivariant (co)homology.  We will discuss it and also recall some facts on equivariant 
cohomology in this section.  Recall that the equivariant homology and cohomology of a space $X$ with an action 
of the torus $T$ are respectively defined as 
\[
H_*^T(X):=H_*(ET\times_T X)\quad\text{and}\quad H^*_T(X):=H^*(ET\times_T X)
\]
where $ET\to BT=ET/T$ is a universal principal $T$-bundle and $ET\times_T X$ is the quotient 
of $ET\times X$ by the $T$-action given by 
\begin{equation} \label{balan}
t(e,x)= (et^{-1},tx) \quad\text{for $(e,x)\in ET\times X$ and $t\in T$.}
\end{equation}

Let $(M,\omega)$ be a symplectic toric manifold and let $M_i$'s $(i=1,\dots,m)$ be the 
characteristic submanifolds of $M$.  Since the $T$-action on $M$ preserves the symplectic form 
$\omega$ and $M_i$ is fixed pointwise under a circle subgroup of $T$ (see Lemma~\ref{vi}), 
the $\omega$ restricted 
to $M_i$ is again a symplectic form.  Therefore the form $\omega$ and its restriction to $M_i$ 
define orientations on $M$ and $M_i$. 
%, which we call an \emph{omniorientation} on $M$ induced from $\omega$.  
Since $M$ and $M_i$ are oriented and the inclusion map from $M_i$ to $M$ is  
equivariant, it defines an equivariant Gysin homomorphism 
\[
H_T^*(M_i)\to H^{*+2}_T(M)
\]
which raises the cohomological degree by 2 because the codimension of $M_i$ in $M$ is 2.   
We denote by $\tau_i$ the image of the unit element $1\in H^0_T(M_i)$ by the 
equivariant Gysin homomorphism.  
The cohomological degree of $\tau_i$ is 2.  
We may think of $\tau_i$ as the Poincar\'e dual of 
the cycle $M_i$ in the equivariant setting.  Since a cup product $\prod_{i\in I}\tau_i$ for 
$I\subset [m]:=\{1,\dots,m\}$ is the Poincar\'e dual of $\cap_{i\in I}M_i$, we see that 
\begin{equation} \label{if}
\text{$\prod_{i\in I}\tau_i=0$\quad if\quad $\cap_{i\in I}M_i=\emptyset$.}
\end{equation}   
It turns out that $H^*_T(M)$ is generated by $\tau_i$'s as a ring and that 
the relations in \eqref{if} are the only relations among $\tau_i$'s, i.e. we have 

\begin{lemm} \label{ring}
$H^*_T(M)=\Z[\tau_1,\dots,\tau_m]/(\prod_{i\in I}\tau_i\mid \cap_{i\in I}M_i=\emptyset)$ as rings. 
\end{lemm}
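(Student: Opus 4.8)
The plan is to realize $H^*_T(M)$ as the Stanley--Reisner ring of the normal fan of $P$ by exhibiting the natural ring map, checking it is well defined and surjective, and then matching ranks over $H^*(BT)$. First I would record that $M$ is \emph{equivariantly formal}. Since $M$ is a smooth projective toric variety, a generic component $\langle\mu,\xi\rangle$ of the moment map is a perfect Bott--Morse function whose critical set is the (isolated) $T$-fixed point set and all of whose indices are even; hence $H^{\mathrm{odd}}(M)=0$ and the Serre spectral sequence of $M\to ET\times_T M\to BT$ collapses. Consequently $H^*_T(M)$ is a free $H^*(BT)$-module, the restriction $r\colon H^*_T(M)\to H^*(M)$ sends $\tau_i$ to the ordinary Poincar\'e dual $\bar\tau_i$ of $M_i$ and is surjective, and $\ker r$ is the ideal generated by $\pi^*(H^{>0}(BT))$.

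Next I would set up the comparison map. Let $x_1,\dots,x_m$ be indeterminates of degree $2$ and define the graded ring homomorphism $\phi\colon \Z[x_1,\dots,x_m]\to H^*_T(M)$ by $\phi(x_i)=\tau_i$. By \eqref{if} every monomial $\prod_{i\in I}x_i$ with $\cap_{i\in I}M_i=\emptyset$ lies in $\ker\phi$, so $\phi$ descends to the quotient $R$ of $\Z[x_1,\dots,x_m]$ by the ideal $I_{\mathrm{SR}}$ generated by these monomials; note that $\cap_{i\in I}M_i=\emptyset$ is equivalent to $\{v_i\}_{i\in I}$ spanning no cone of the normal fan, so $R$ is exactly the Stanley--Reisner ring of that fan. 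To prove surjectivity I would first establish the \emph{divisor relation} $\pi^*(u)=\sum_{i=1}^m\langle u,v_i\rangle\tau_i$ for $u\in H^2(BT)=(\Z^n)^*$, mirroring \eqref{V}. This is checked after restriction to each $T$-fixed point $p=\cap_{i\in I_p}M_i$ (with $\{v_i\}_{i\in I_p}$ a $\Z$-basis by non-singularity): there $\tau_j|_p=0$ for $j\notin I_p$, while by Lemma~\ref{vi} the classes $\{\tau_i|_p\}_{i\in I_p}$ are the weights dual to $\{v_i\}_{i\in I_p}$, so both sides restrict to $u$; equivariant formality makes the total restriction to the fixed points injective. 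Hence $\pi^*(H^*(BT))\subseteq\operatorname{im}\phi$, and since the $\bar\tau_i$ generate $H^*(M)$ as a ring, a degree induction using $\ker r=\pi^*(H^{>0}(BT))\cdot H^*_T(M)$ shows $\phi$ is onto.

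Finally I would prove injectivity by a rank count. Choosing a basis $u_1,\dots,u_n$ of $(\Z^n)^*$, the linear forms $\theta_j:=\sum_i\langle u_j,v_i\rangle x_i$ satisfy $\phi(\theta_j)=\pi^*(u_j)$, so $\phi$ is a morphism of graded $H^*(BT)$-modules, where $H^*(BT)=\Z[\theta_1,\dots,\theta_n]$ acts on $R$ through the $\theta_j$. Because $P$ is simple its normal fan is complete and simplicial, so $R$ is Cohen--Macaulay and the $\theta_j$ form a linear system of parameters; thus $R$ is a free $H^*(BT)$-module and $R/(\theta_1,\dots,\theta_n)$ is a free $\Z$-module whose Poincar\'e polynomial is the $h$-polynomial of $P$, of total rank equal to the number of vertices of $P$. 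On the other side $H^*_T(M)$ is free over $H^*(BT)$ of total rank $\dim_\Q H^*(M;\Q)$, again the number of $T$-fixed points, i.e. the number of vertices of $P$. Reducing $\phi$ modulo $(\theta_j)$ gives the standard presentation $\bar\phi\colon R/(\theta_j)\to H^*(M)$, a surjection of free $\Z$-modules with equal Poincar\'e polynomial, hence an isomorphism; a graded Nakayama argument then lifts this to the conclusion that $\phi$ is an isomorphism of $H^*(BT)$-modules, proving the lemma.

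I expect the main obstacle to be this last step --- verifying that the $\theta_j$ form a genuine linear system of parameters for the Stanley--Reisner ring (equivalently that $R$ is free over $H^*(BT)$ of the correct rank) and matching that rank with the even Betti numbers of $M$ --- since this is precisely where the content ``the relations \eqref{if} are the only relations'' resides; by contrast the well-definedness, the divisor relation, and surjectivity are comparatively formal. An alternative that sidesteps the Cohen--Macaulay input is to use $ET\times_T M\simeq ET^m\times_{T^m}\mathcal Z_P$ together with the $T^m$-homotopy equivalence of $\mathcal Z_P$ with the complement of the coordinate-subspace arrangement indexed by $I_{\mathrm{SR}}$, for which the Borel construction is known to have the Stanley--Reisner ring as its cohomology; I would keep this in reserve.
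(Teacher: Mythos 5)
The paper gives no proof of Lemma~\ref{ring} at all: it is introduced with ``It turns out that\dots'' and the surrounding citations (\cite{masu99}, \cite{bu-pa02}, and in effect the Davis--Januszkiewicz theorem) are meant to carry it, so any complete argument you supply is by definition a different route. Your sketch is the standard proof and is essentially sound: equivariant formality via the perfect Morse function, the divisor relation verified by restriction to fixed points, and the rank comparison against the $h$-vector using that the Stanley--Reisner ring of the boundary complex of a simplicial polytope is Cohen--Macaulay over every field (hence the $\theta_j$ form a regular sequence with quotient a free $\Z$-module, as you need). The one step I would ask you to repair is surjectivity: you invoke that the classes $\bar\tau_i=\mu_i$ generate $H^*(M)$ as a ring, i.e.\ the Jurkiewicz--Danilov presentation --- but in this paper that statement is Proposition~\ref{DaJu} and is \emph{deduced from} Lemma~\ref{ring}, so as written your argument is circular within the paper's logic unless you prove generation independently (for instance from the Morse-theoretic cell basis of $H^*_T(M)$ over $H^*(BT)$, where each basis element is, up to lower-order terms, a product of the $\tau_i$ indexed by the facets on the negative side of a vertex). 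Your reserve alternative --- $ET\times_T M\simeq ET^m\times_{T^m}\mathcal{Z}_P$ together with the $T^m$-homotopy equivalence of $\mathcal{Z}_P$ with the complement of the coordinate subspace arrangement, whose Borel construction has the Stanley--Reisner ring as cohomology --- sidesteps both the Cohen--Macaulay input and the circularity, and it meshes directly with the Delzant-construction setup the paper already has in place in Sections~\ref{sect:1} and~\ref{sect:2}; I would promote it from reserve to the main argument.
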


In particular, $\tau_i$'s are a free additive basis of 
$H^2_T(M)$ and the following easily follows from this fact. 

\begin{lemm} {\rm (see \cite[Lemma 1.5]{masu99} for example).} \label{lemm15}
Let $\pi\colon  ET\times_T M\to BT$ be the projection on the first factor.  Then 
for each $i=1,\dots,m$, there is a unique element $v_i\in H_2(BT)$ such that 
\begin{equation} \label{pi*}
\pi^*(u)=\sum_{i=1}^m\langle u,v_i\rangle\tau_i\quad\text{for any\ \ $u\in H^2(BT)$}
\end{equation}
where $\langle\ ,\ \rangle$ denotes the natural pairing between cohomology and homology. 
\end{lemm}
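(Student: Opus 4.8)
The plan is to read off everything from the structural result of Lemma~\ref{ring}, which tells us that $\tau_1,\dots,\tau_m$ are a free $\Z$-basis of the degree-two part $H^2_T(M)$. The map $\pi^*\colon H^2(BT)\to H^2_T(M)$ is a homomorphism of free abelian groups, so the statement amounts to recording the matrix of $\pi^*$ in this basis and reinterpreting its rows as homology classes of $BT$.

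First I would use the basis property to write, for each $u\in H^2(BT)$,
\[
\pi^*(u)=\sum_{i=1}^m c_i(u)\,\tau_i
\]
with uniquely determined integers $c_i(u)$. Because $\pi^*$ is additive and the coefficients in this expansion are unique, each assignment $c_i\colon H^2(BT)\to\Z$ is itself a group homomorphism, i.e.\ $c_i\in\Hom(H^2(BT),\Z)$.

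Next I would identify $\Hom(H^2(BT),\Z)$ with $H_2(BT)$. Since $BT=(\C P^\infty)^n$, the ring $H^*(BT)$ is a polynomial algebra on $n$ generators of degree two and is in particular torsion-free; hence the universal coefficient theorem makes the Kronecker pairing $\langle\ ,\ \rangle\colon H^2(BT)\times H_2(BT)\to\Z$ perfect. Each functional $c_i$ is therefore represented by a unique class $v_i\in H_2(BT)$ satisfying $c_i(u)=\langle u,v_i\rangle$ for all $u$, which is exactly \eqref{pi*}; the uniqueness of $v_i$ is precisely the non-degeneracy of the pairing.

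I do not expect a genuine obstacle: the entire content is carried by the freeness of $H^2_T(M)$ on the $\tau_i$ (Lemma~\ref{ring}) together with the perfectness of the Kronecker pairing for $BT$. The only point deserving a word of care is that the $v_i$ produced here live in $H_2(BT)$ and are defined purely by formula \eqref{pi*}; that they coincide with the primitive normal vectors $v_i$ of \eqref{P}, justifying the reuse of notation, is a separate compatibility furnished by the Delzant construction of Section~\ref{sect:1} and is not required for the formal statement proved here.
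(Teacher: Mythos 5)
Your argument is correct and is essentially the one the paper intends: the lemma is stated as an easy consequence of the fact (from Lemma~\ref{ring}) that the $\tau_i$ form a free additive basis of $H^2_T(M)$, and your expansion of $\pi^*(u)$ in that basis followed by representing the coefficient functionals via the perfect Kronecker pairing on $BT$ is exactly that routine verification. Your closing caveat, that the identification of these $v_i$ with the normal vectors of \eqref{P} is a separate matter, is also consistent with how the paper treats it.
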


The Leray-Serre spectral sequence of the fibration 
\begin{equation} \label{fibr}
M\stackrel{\iota}\longrightarrow ET\times_TM\stackrel{\pi}\longrightarrow BT
\end{equation}
collapses because $H^{odd}(M)=H^{odd}(BT)=0$.  Therefore  
$H^*_T(M)=H^*(BT)\otimes H^*(M)$ as $H^*(BT)$-modules and hence 
$H^*(M)$ is the quotient of $H^*_T(M)$ by the ideal generated by $\pi^*(u)$ for 
$u\in H^2(BT)$. 
This together with Lemmas~\ref{ring} and~\ref{lemm15} implies the following well-known fact. 

\begin{prop} \label{DaJu}
We set 
\begin{equation} \label{mu}
\mu_i:=\iota^*(\tau_i) \in H^2(M). 
\end{equation}
Then $H^*(M)$ is the quotient of a polynomial ring $\Z[\mu_1,\dots,\mu_m]$ by the 
ideal generated by the following two types of elements:
\begin{enumerate}
\item $\prod_{i\in I}\mu_i$ for $I\subset [m]$ with $\cap_{i\in I}M_i=\emptyset$.
\item $\sum_{i=1}^m\langle u,v_i\rangle\mu_i$ for $u\in H^2(BT)$.
\end{enumerate}
\end{prop}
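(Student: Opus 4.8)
The plan is to read off the presentation of $H^*(M)$ by pushing the equivariant presentation of Lemma~\ref{ring} forward along the restriction map $\iota^*\colon H^*_T(M)\to H^*(M)$ of the fibration \eqref{fibr}. Three ingredients are already in place: the ring description $H^*_T(M)=\Z[\tau_1,\dots,\tau_m]/(\prod_{i\in I}\tau_i\mid \cap_{i\in I}M_i=\emptyset)$ from Lemma~\ref{ring}; the formula $\pi^*(u)=\sum_{i=1}^m\langle u,v_i\rangle\tau_i$ from Lemma~\ref{lemm15}; and the consequence of the collapse of the Leray--Serre spectral sequence of \eqref{fibr}, namely that $\iota^*$ is surjective with kernel the ideal generated by the elements $\pi^*(u)$, $u\in H^2(BT)$.

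First I would compose the surjection
\[
\Z[\tau_1,\dots,\tau_m]\twoheadrightarrow H^*_T(M)
\]
of Lemma~\ref{ring} with $\iota^*$ to get a surjection onto $H^*(M)$ sending each $\tau_i$ to $\iota^*(\tau_i)=\mu_i$ by the definition \eqref{mu}. Identifying the abstract polynomial ring $\Z[\mu_1,\dots,\mu_m]$ with $\Z[\tau_1,\dots,\tau_m]$ via $\mu_i\leftrightarrow\tau_i$, this already exhibits $H^*(M)$ as a quotient of $\Z[\mu_1,\dots,\mu_m]$. To pin down the relations I would compute the kernel of this composite: it is the preimage in $\Z[\tau_1,\dots,\tau_m]$ of the kernel of $\iota^*$. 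Since that kernel is the ideal generated by the $\pi^*(u)$, and since $\pi^*(u)=\sum_i\langle u,v_i\rangle\tau_i$ by Lemma~\ref{lemm15}, the elementary fact that quotienting $H^*_T(M)=\Z[\tau]/I_{\mathrm{SR}}$ further by the images of elements $k\in\Z[\tau]$ returns $\Z[\tau]/(I_{\mathrm{SR}}+(k))$ shows that this preimage is exactly the ideal generated by the Stanley--Reisner monomials $\prod_{i\in I}\tau_i$ (with $\cap_{i\in I}M_i=\emptyset$) together with the linear forms $\sum_i\langle u,v_i\rangle\tau_i$. Transporting along $\tau_i\leftrightarrow\mu_i$, these are precisely relations (1) and (2), which is the assertion.

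The argument is essentially bookkeeping, so I do not expect a genuine obstacle; the one point that needs care is the quotient-of-a-quotient step, where I must be sure that generating the kernel of $\iota^*$ inside $H^*_T(M)$ by the $\pi^*(u)$ corresponds, back in the polynomial ring, to adjoining exactly the linear relations (2) --- no further relations being forced --- and that this holds integrally. The latter is guaranteed because the spectral-sequence collapse gives the honest module splitting $H^*_T(M)=H^*(BT)\otimes_\Z H^*(M)$ with torsion-free factors, so $H^*(M)$ is identified with the genuine quotient $H^*_T(M)/(\pi^*(H^2(BT)))$ rather than a mere subquotient, and the presentation follows over $\Z$.
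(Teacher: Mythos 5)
Your proposal is correct and follows the same route as the paper: the paper likewise derives the presentation by combining Lemma~\ref{ring}, the formula \eqref{pi*} of Lemma~\ref{lemm15}, and the collapse of the Leray--Serre spectral sequence of \eqref{fibr} (which identifies $H^*(M)$ with the quotient of $H^*_T(M)$ by the ideal generated by $\pi^*(u)$, $u\in H^2(BT)$). The quotient-of-a-quotient bookkeeping you spell out is exactly the implicit final step in the paper's argument.
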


A homomorphism $f\colon S^1\to T$ induces a continuous map $Bf\colon BS^1\to BT$.  
We fix a generator $\kappa$ of $H_2(BS^1)\cong\Z$.  Then the correspondence $f\to (Bf)_*(\kappa)$ 
defines an isomorphism
\begin{equation} \label{hom} 
\Hom(S^1,T)\cong H_2(BT)
\end{equation}
and we denote by $\lambda_v$ the element of $\Hom(S^1,T)$ corresponding to $v\in H_2(BT)$.  
The identity in Lemma~\ref{lemm15} implies the following. 

\begin{lemm}{\rm (see \cite[Lemma 1.10]{masu99} for example).} 
For the elements $v_i\in H_2(BT)$  $(i=1,\dots,m)$ defined in Lemma~\ref{lemm15}, 
$\lambda_{v_i}(S^1)$ is the circle subgroup of $T$ which fixes $M_i$ pointwise. 
\end{lemm}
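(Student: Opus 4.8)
The plan is to reduce the statement to a computation of the isotropy weights of the $T$-action at the fixed points of $M$, and then to read off the circle directly from the relation in Lemma~\ref{lemm15}. Since each $M_i$ is $T$-invariant, it is in particular invariant under $\lambda_{v_i}(S^1)$ for every $i$, so the real content is that this circle acts \emph{trivially} on $M_i$. The $T$-fixed points of $M$ are isolated and correspond to the vertices of $P$ (recall $H^{odd}(M)=0$). I will first show that at each such point the prescribed circle fixes the tangent space of $M_i$, and then pass from the tangent level to all of $M_i$ by a connectedness argument.

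First I would fix a $T$-fixed point $p$, i.e. a vertex of $P$ at which exactly $n$ facets meet, say the $M_j$ with $j$ ranging over an index set $I_p$ of size $n$. The tangent space $T_pM$ splits $T$-equivariantly into $n$ complex lines, and by the nonsingularity of $P$ the corresponding isotropy weights $\theta_j\in H^2(BT)$ ($j\in I_p$) form a basis of $H^2(BT)$; moreover $T_pM_j=\bigoplus_{k\neq j}(\text{weight-}\theta_k\text{ line})$, while the weight-$\theta_j$ line is the normal line of $M_j$ at $p$. To pin down the $v_j$ of Lemma~\ref{lemm15}, I restrict the identity $\pi^*(u)=\sum_{i=1}^m\langle u,v_i\rangle\tau_i$ along the section $BT\to ET\times_TM$ determined by the fixed point $p$. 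This section composed with $\pi$ is the identity, so the left-hand side restricts to $u$; on the right-hand side, the restriction of $\tau_i$ to $p$ is the equivariant Euler class of the normal bundle of $M_i$ when $p\in M_i$ and is $0$ otherwise, i.e. it equals $\theta_i$ for $i\in I_p$ and vanishes for $i\notin I_p$. Hence $u=\sum_{j\in I_p}\langle u,v_j\rangle\theta_j$ for all $u$, which says exactly that $\{v_j\}_{j\in I_p}$ is the dual basis of $\{\theta_j\}_{j\in I_p}$, i.e. $\langle\theta_k,v_j\rangle=\delta_{kj}$.

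From this the conclusion is immediate at the tangent level: $\lambda_{v_i}(S^1)$ acts on the weight-$\theta_k$ line of $T_pM$ through the character $g\mapsto g^{\langle\theta_k,v_i\rangle}=g^{\delta_{ik}}$, so it acts trivially on $T_pM_i=\bigoplus_{k\neq i}(\cdots)$ and by the standard rotation on the normal line. To globalize, I would use that $M_i$ is connected: the fixed-point set $M_i^{\lambda_{v_i}(S^1)}$ is a closed submanifold whose tangent space at $p$ is all of $T_pM_i$, so it is open in $M_i$; being also closed and nonempty in the connected manifold $M_i$, it must equal $M_i$. Therefore $\lambda_{v_i}(S^1)$ fixes $M_i$ pointwise, as claimed.

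The main obstacle is the localization input in the second step, namely the identification of the restriction of $\tau_i$ to a fixed point with the equivariant Euler class of the normal bundle (equal to the isotropy weight $\theta_i$), together with its vanishing off $M_i$; this is the self-intersection formula for the equivariant Poincar\'e dual defined via the Gysin map, and everything else is formal. An alternative route, avoiding explicit weights, is to observe that the cohomological $v_i$ of Lemma~\ref{lemm15} satisfies the very same defining relation as the primitive normal vector of the facet $P_i$ in \eqref{V}; since $\tau_1,\dots,\tau_m$ form a basis of $H^2_T(M)$, the vector $v_i$ is determined by this relation, so it coincides with the geometric normal vector, and then Lemma~\ref{vi} finishes the proof. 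Verifying that the two relations determine the same lattice vector ultimately rests on the same fixed-point restriction, however, so I would present the localization argument as the primary one.
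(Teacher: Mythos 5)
Your proof is correct, but it is worth pointing out that the paper itself does not prove this lemma at all: it is quoted from \cite[Lemma 1.10]{masu99}, and the surrounding text merely remarks that it follows from the identity \eqref{pi*} once the cohomological $v_i$'s are identified with the primitive normal vectors $v_i$ of Section~\ref{sect:1}, after which Lemma~\ref{vi} (proved via the Delzant construction, $\mathcal{Z}_{P_i}=\mathcal{Z}_P\cap\{z_i=0\}$ and \eqref{Vlambda}) gives the statement. Your ``alternative route'' at the end is thus the one the paper has in mind, whereas your primary argument is a genuinely different, self-contained proof by equivariant localization that never invokes the Delzant construction: restricting $\pi^*(u)=\sum_i\langle u,v_i\rangle\tau_i$ to a fixed point $p$ with index set $I_p$, using $\tau_j|_p=\theta_j$ for $j\in I_p$ and $\tau_j|_p=0$ otherwise, you obtain that $\{v_j\}_{j\in I_p}$ is dual to the isotropy weights $\{\theta_j\}_{j\in I_p}$, and the triviality of $\lambda_{v_i}(S^1)$ on $T_pM_i$ plus the open--closed argument in the connected $M_i$ finishes the job. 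This buys independence from the explicit quotient construction at the price of importing the self-intersection formula for the equivariant Gysin class, which you correctly flag as the essential input. Two small points to make explicit: you must choose $p$ to be (the preimage of) a vertex of the facet $P_i$ so that $i\in I_p$, and the orientation conventions (all induced by $\omega$) are what guarantee $\tau_i|_p=+\theta_i$ rather than $-\theta_i$ --- though a sign would not change the kernel of the character and hence not affect the conclusion.
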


This lemma corresponds to Lemma~\ref{vi}.  More precisely, one can see 
that the $v_i$'s defined in Lemma~\ref{lemm15} can be identified with the $v_i$'s 
in Section~\ref{sect:1} through an identification 
\[
H_2(BT)=\Z^n,
\] 
(see \cite{masu99} for example).  Taking the dual of this identification, we obtain an identification  
\[
H^2(BT)=(\Z^n)^*.
%H^2(BT;\R)=\Lie(T)^*=(\R^n)^*
\]
Then \eqref{P} can be rewritten as 
\begin{equation} \label{P2}
P=\{u\in H^2(BT;\R)\mid \langle u,v_i\rangle\ge a_i\quad(i=1,\dots,m)\}
\end{equation}
where $\langle\ ,\ \rangle$ denotes the natural pairing between cohomology and 
homology as before. 

\begin{lemm} \label{iden}
The exact sequences in \eqref{exactV} and 
\eqref{exactV*} can be regarded as exact sequences derived from the fibration 
\eqref{fibr}, namely, \eqref{exactV} can be regarded as 
\begin{equation} \label{exact}
0\to H_2(M)\stackrel{\iota_*}\longrightarrow H_2^T(M)
\stackrel{\pi_*}\longrightarrow H_2(BT)\to 0
\end{equation}
and \eqref{exactV*} as 
\begin{equation} \label{exact*}
0\leftarrow H^2(M)\stackrel{\iota^*}\longleftarrow H_T^2(M)\stackrel{\pi^*}\longleftarrow
H^2(BT)\leftarrow 0.
\end{equation}
\end{lemm}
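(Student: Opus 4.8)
The plan is to show that, under the natural identifications provided by equivariant cohomology, the purely algebraic sequences \eqref{exactV} and \eqref{exactV*} coincide with the topological sequences \eqref{exact} and \eqref{exact*} attached to the fibration \eqref{fibr}. Since every group in sight is a free $\Z$-module of finite rank, it is enough to treat one of the two and recover the other by dualization; I will establish the cohomology sequence \eqref{exact*}.

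First I would pin down the three degree-two identifications. The identification $H_2(BT)=\Z^n$ has already been fixed, and dualizing it gives $H^2(BT)=(\Z^n)^*$. Because the classes $\tau_1,\dots,\tau_m$ form a free additive basis of $H^2_T(M)$, as noted just after Lemma~\ref{ring}, sending $\tau_i$ to $e_i^*$ produces an identification $H^2_T(M)=(\Z^m)^*$, and dually $H_2^T(M)=\Z^m$. Under these identifications the formula \eqref{pi*} of Lemma~\ref{lemm15},
\[
\pi^*(u)=\sum_{i=1}^m\langle u,v_i\rangle\tau_i,
\]
reads exactly as the formula \eqref{V} defining the algebraic map $\pi^*\colon(\Z^n)^*\to(\Z^m)^*$, so the two homomorphisms written $\pi^*$ agree.

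It remains to verify exactness of \eqref{exact*} at each place. For this I would appeal to the collapse of the Leray-Serre spectral sequence of \eqref{fibr}, already used before Proposition~\ref{DaJu}, which yields the splitting $H^*_T(M)=H^*(BT)\otimes H^*(M)$ of $H^*(BT)$-modules and identifies $H^*(M)$ with the quotient of $H^*_T(M)$ by the ideal generated by $\pi^*(H^2(BT))$. In degree two this says $H^2_T(M)=H^2(BT)\oplus H^2(M)$, with $\pi^*$ the inclusion of the first summand and the restriction $\iota^*$ the projection onto the second; hence $\pi^*$ is injective, $\iota^*$ is surjective, and the kernel of $\iota^*$ is the image of $\pi^*$. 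This is precisely the exact sequence \eqref{exact*}, and it presents $H^2(M)$ as the cokernel of $\pi^*$. On the other hand, dualizing \eqref{exactV} exhibits $(\Ker\pi_*)^*$ as the cokernel of the algebraic $\pi^*$, so the two cokernels, and with them $\iota^*$, are identified; thus \eqref{exact*} is \eqref{exactV*}.

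Finally, dualizing \eqref{exact*} once more---legitimate since all terms are free over $\Z$---turns $\pi^*$ into the map $\pi_*$ of \eqref{exactV} and $H^2(M)$ into $H_2(M)=\Ker\pi_*$, giving \eqref{exact}, which is therefore \eqref{exactV}. The one step demanding care is the claim that the summand of $H^2_T(M)$ carrying the image of $\pi^*$ is exactly the kernel of the fiber restriction $\iota^*$; this is where the collapse of the spectral sequence, i.e.\ the vanishing of the odd cohomology of $M$ and $BT$, is indispensable.
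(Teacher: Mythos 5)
Your proposal is correct and follows essentially the same route as the paper: the paper's proof consists precisely of identifying $H^2_T(M)$ with $(\Z^m)^*$ via $\tau_i\mapsto e_i^*$ and observing that \eqref{pi*} then agrees with \eqref{V}. You simply make explicit the exactness check via the collapse of the Leray--Serre spectral sequence and the dualization step, which the paper leaves implicit.
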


\begin{proof}
If we identify $H_T^2(M)$ with $(\Z^m)^*$ through the identification of $\tau_i$ with $e_i^*$ for $i=1,\dots,m$, 
then \eqref{pi*} agrees with \eqref{V} and this implies the lemma. 
\end{proof}

Through the identifications in Lemma~\ref{iden}, \eqref{ZP} turns into 
\begin{equation} \label{ZP2}
\mathcal{Z}_P=\{ z\in\C^m\mid \sum_{i=1}^m(\frac{1}{2}|z_i|^2+a_i)\mu_i=0\}
\end{equation}
where $\mu_i$'s are the elements of $H^2(M)$ defined in \eqref{mu}.  

%\begin{rema} 
%I suspect that $[\omega]=-\sum_{i=1}^m a_i\mu_i\in H^2(M;\R)$.
%\end{rema}

\section{Roots of a compact Lie subgroup of $\Sympl(M,\omega)$} \label{sect:3}

If $g\in \Sympl(M,\omega)$ normalizes the torus $T$, then $\rhog$ defined by 
\begin{equation} \label{3rhog}
\text{$\rhog(t):=gtg^{-1}$}
\end{equation}
is a group automorphism of $T$ and the diffeomorphism $g$ of $M$ is $\rhog$-equivariant.  
Let $E\rhog$ be a homeomorphism of $ET$ induced from $\rhog$.  It is 
$\rhog$-equivariant, i.e.  $E\rhog(et)=E\rhog(e)\rhog(t)$ for $e\in ET$ and $t\in T$.  
Therefore, a homeomorphism of $ET\times M$ sending $(e,x)$ to $(E\rhog(e), gx)$ 
is $\rhog$-equivariant and  
induces a homeomorphism of $ET\times_T M$.  Hence we obtain a ring 
automorphism of $H^*_T(M)$, denoted by $g^*$, which preserves the subalgebra $\pi^*(H^*(BT))$. 
It easily follows from the definition of $g^*$ that  
\begin{equation} \label{gpai}
g^*\circ \pi^*=\pi^*\circ {\rhog}^*\quad\text{on $H^*(BT)$}
\end{equation} 
where ${\rhog}^*$ is an automorphism of $H^*(BT)$ induced from $\rhog$. 

Since the diffeomorphism $g$ of $M$ is $\rhog$-equivariant, it permutes the 
characteristic submanifolds $M_i$'s.  Moreover, since $g$ preserves the form $\omega$, 
it preserves the orientations on $M$ and $M_i$'s induced from $\omega$.  These imply that  
there is a permutation $\sigma$ on $[m]$ such that 
\begin{equation} \label{tperm}
g^*(\tau_i)=\tau_{\sigma(i)} \quad\text{for any $i$}.
\end{equation}

With these understood 

\begin{lemm} \label{lemm:rhov}
Let $\rhog_*$ be an automorphism of $H_*(BT)$ induced from $\rhog$.  Then 
$\rhog_*(v_{\sigma(i)})=v_i$ for any $i$. 
\end{lemm}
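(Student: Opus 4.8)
The plan is to derive the identity $\rhog_*(v_{\sigma(i)})=v_i$ from the already-established relation \eqref{tperm} on the degree-two cohomology together with the compatibility \eqref{gpai} between $g^*$ and $\pi^*$, by pairing everything against the homology classes $v_i\in H_2(BT)$ and using the defining formula \eqref{pi*}. The automorphisms $g^*$ on $H^*_T(M)$ and $\rhog^*$ on $H^*(BT)$ are dual to $\rhog_*$ on $H_2(BT)$ in the appropriate sense, so the whole argument is a short piece of duality bookkeeping once the right pairing is chosen.

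First I would fix $u\in H^2(BT)$ arbitrary and compute $g^*(\pi^*(u))$ in two ways. On one hand, by \eqref{gpai} it equals $\pi^*(\rhog^*(u))$, so by Lemma~\ref{lemm15} (formula \eqref{pi*}) we have
\begin{equation*}
g^*(\pi^*(u))=\pi^*(\rhog^*(u))=\sum_{i=1}^m\langle \rhog^*(u),v_i\rangle\,\tau_i.
\end{equation*}
On the other hand, applying $g^*$ directly to $\pi^*(u)=\sum_i\langle u,v_i\rangle\tau_i$ and using \eqref{tperm} gives
\begin{equation*}
g^*(\pi^*(u))=\sum_{i=1}^m\langle u,v_i\rangle\,\tau_{\sigma(i)}=\sum_{j=1}^m\langle u,v_{\sigma^{-1}(j)}\rangle\,\tau_j,
\end{equation*}
where I have reindexed by $j=\sigma(i)$. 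Since $\tau_1,\dots,\tau_m$ are a free additive basis of $H^2_T(M)$ (as noted after Lemma~\ref{ring}), I may equate coefficients of each $\tau_j$ in the two expressions, obtaining $\langle \rhog^*(u),v_j\rangle=\langle u,v_{\sigma^{-1}(j)}\rangle$ for all $j$ and all $u$.

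Finally I would convert this coefficient identity into the claimed statement about $\rhog_*$. The key point is that $\rhog^*$ on $H^2(BT)$ is adjoint to $\rhog_*$ on $H_2(BT)$ under the natural pairing, i.e. $\langle \rhog^*(u),v\rangle=\langle u,\rhog_*(v)\rangle$ for all $u,v$; this is just functoriality of the pairing under the map $B\rhog$. Substituting this into the left-hand side gives $\langle u,\rhog_*(v_j)\rangle=\langle u,v_{\sigma^{-1}(j)}\rangle$ for all $u\in H^2(BT)$, and since the pairing is nondegenerate this forces $\rhog_*(v_j)=v_{\sigma^{-1}(j)}$. Replacing $j$ by $\sigma(i)$ yields $\rhog_*(v_{\sigma(i)})=v_i$, as desired. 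I expect the main obstacle to be purely notational rather than conceptual: one must keep the direction of $\sigma$ versus $\sigma^{-1}$ consistent throughout the reindexing and the adjointness step, since a single misplaced inverse would flip the final identity. Verifying the adjointness $\langle\rhog^*(u),v\rangle=\langle u,\rhog_*(v)\rangle$ carefully is the only substantive point, and it follows directly from the fact that both $\rhog^*$ and $\rhog_*$ are induced by the single map $B\rhog\colon BT\to BT$.
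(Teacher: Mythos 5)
Your proposal is correct and follows essentially the same route as the paper: both compute $\pi^*(\rhog^*(u))=g^*(\pi^*(u))$ in two ways using \eqref{gpai}, \eqref{pi*} and \eqref{tperm}, equate coefficients of the free basis $\tau_j$, and then invoke the adjointness $\langle\rhog^*(u),v\rangle=\langle u,\rhog_*(v)\rangle$ together with nondegeneracy of the pairing. The only difference is cosmetic: you reindex by $j=\sigma(i)$ and solve for $\rhog_*(v_j)$ before substituting $j=\sigma(i)$ back, whereas the paper reindexes inside the second sum so the identity $\rhog_*(v_{\sigma(i)})=v_i$ appears directly.
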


\begin{proof}
Sending the identity \eqref{pi*} by $g^*$, it follows from \eqref{gpai} and \eqref{tperm} that we have  
\begin{equation} \label{comp1}
\pi^*(\rhog^*(u))=g^*(\pi^*(u))=\sum_{i=1}^m\langle u,v_i\rangle g^*(\tau_i)=\sum_{i=1}^m \langle u,v_i\rangle\tau_{\sigma(i)},
\end{equation}
while it follows from \eqref{pi*} applied to $\rhog^*(u)$ 
instead of $u$ that we have 
\begin{equation} \label{comp2}
\begin{split}
\pi^*(\rhog^*(u))&=\sum_{i=1}^m \langle \rhog^*(u),v_i\rangle\tau_i 
=\sum_{i=1}^m \langle \rhog^*(u),v_{\sigma(i)}\rangle\tau_{\sigma(i)}\\ 
&=\sum_{i=1}^m \langle u,\rhog_*(v_{\sigma(i)})\rangle\tau_{\sigma(i)}.
\end{split}
\end{equation}
Comparing \eqref{comp1} with \eqref{comp2} and noting that 
$\tau_{\sigma(i)}$'s are free over $\Z$, we obtain  
\[
\langle u,v_i\rangle =\langle u,\rhog_*(v_{\sigma(i)})\rangle \quad\text{for any $i$},
\]
but since this identity holds for any $u\in H^2(BT)$, the desired identity in the lemma follows. 
\end{proof} 

The following lemma is due to M. Wiemeler and will play a key role in the subsequent argument. 

\begin{lemm}{\rm (\cite[Lemma 2.1]{wiem08})}. \label{W}
If $g$ induces the identity on $H^2(M)$ and $\rhog^*$ is a reflection on $H^2(BT)$, 
then the $\sigma$ in \eqref{tperm} permutes exactly two elements in $[m]$ and fixes the others. 
\end{lemm}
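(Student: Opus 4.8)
The plan is to translate the two topological hypotheses on $g$ into purely combinatorial statements about the vectors $v_i$ and the permutation $\sigma$, and then to exploit the one defining feature of a reflection that has not yet been used: its $(-1)$-eigenspace is one-dimensional.

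\emph{Step 1: translate the hypothesis on $H^2(M)$.} First I would record that the homeomorphism of $ET\times_T M$ inducing $g^*$ restricts on the fiber to $g$ up to homotopy (the $ET$-factor is contractible), so that $\iota^*\circ g^*=(g|_M)^*\circ\iota^*$. Applying this to $\tau_i$ and using \eqref{mu} and \eqref{tperm} gives
\begin{equation*}
(g|_M)^*(\mu_i)=\iota^*(g^*(\tau_i))=\iota^*(\tau_{\sigma(i)})=\mu_{\sigma(i)}.
\end{equation*}
Since $g$ acts as the identity on $H^2(M)$, this yields $\mu_{\sigma(i)}=\mu_i$ for every $i$. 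By the exactness of \eqref{exact*}, $\tau_{\sigma(i)}-\tau_i$ then lies in the image of $\pi^*$, so there is $w_i\in H^2(BT)$ with $\tau_{\sigma(i)}-\tau_i=\pi^*(w_i)$. Expanding $\pi^*(w_i)=\sum_j\langle w_i,v_j\rangle\tau_j$ by Lemma~\ref{lemm15} and comparing coefficients of the free basis $\{\tau_j\}$ gives the key formula
\begin{equation*}
\langle w_i,v_j\rangle=\delta_{j,\sigma(i)}-\delta_{j,i}\qquad(1\le j\le m),
\end{equation*}
and in particular $w_i=0$ whenever $\sigma(i)=i$.

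\emph{Step 2: $\sigma$ is an involution and $w_i$ is anti-invariant.} Lemma~\ref{lemm:rhov} rephrases as $\langle\rhog^*(u),v_l\rangle=\langle u,v_{\sigma^{-1}(l)}\rangle$ for all $u$. Applying this twice and using $(\rhog^*)^2=\mathrm{id}$ (valid since $\rhog^*$ is a reflection) gives $\langle u,v_{\sigma^{-2}(l)}\rangle=\langle u,v_l\rangle$, hence $v_{\sigma^{-2}(l)}=v_l$; as the $v_i$ are pairwise distinct (distinct facets of a polytope with no redundant inequality have distinct primitive inward normals) this forces $\sigma^2=\mathrm{id}$. I would then compute, using Step 1,
\begin{equation*}
\langle\rhog^*(w_i),v_l\rangle=\langle w_i,v_{\sigma^{-1}(l)}\rangle=\delta_{l,\sigma^2(i)}-\delta_{l,\sigma(i)}=\delta_{l,i}-\delta_{l,\sigma(i)}=-\langle w_i,v_l\rangle.
\end{equation*}
Because the $v_l$ span $H_2(BT;\R)$ (equivalently, $\pi^*$ is injective in \eqref{exact*}), this gives $\rhog^*(w_i)=-w_i$; that is, every $w_i$ lies in the $(-1)$-eigenspace $V_-$ of the reflection $\rhog^*$.

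\emph{Step 3: conclude via $\dim V_-=1$.} Suppose $\sigma$ moved two disjoint pairs, i.e.\ there are $i\ne k$ with $\sigma(i)\ne i$, $\sigma(k)\ne k$ and $\{i,\sigma(i)\}\cap\{k,\sigma(k)\}=\emptyset$. By Step 1 the pairing matrix of $w_i,w_k$ against $v_{\sigma(i)},v_{\sigma(k)}$ is
\begin{equation*}
\begin{pmatrix}\langle w_i,v_{\sigma(i)}\rangle&\langle w_i,v_{\sigma(k)}\rangle\\[2pt]\langle w_k,v_{\sigma(i)}\rangle&\langle w_k,v_{\sigma(k)}\rangle\end{pmatrix}=\begin{pmatrix}1&0\\0&1\end{pmatrix},
\end{equation*}
the off-diagonal entries vanishing precisely because the pairs are disjoint, so $w_i$ and $w_k$ are linearly independent. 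This contradicts Step 2, since $w_i,w_k\in V_-$ and $\dim_\R V_-=1$. Hence $\sigma$ moves at most one pair. That $\sigma\ne\mathrm{id}$ is immediate: if $\sigma=\mathrm{id}$ then $g^*=\mathrm{id}$ on $H^2_T(M)$, whence $\rhog^*=\mathrm{id}$ by \eqref{gpai} and the injectivity of $\pi^*$, contradicting that $\rhog^*$ is a reflection. Therefore $\sigma$ transposes exactly two elements of $[m]$ and fixes the others. The main obstacle is conceptual rather than computational: recognizing that the correct invariant to extract from ``$g=\mathrm{id}$ on $H^2(M)$'' is the element $w_i\in H^2(BT)$, and that both hypotheses conspire to place every $w_i$ in $V_-$, so that the reflection hypothesis (the one-dimensionality of $V_-$) is exactly what caps the number of indices that $\sigma$ can move.
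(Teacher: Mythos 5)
Your proof is correct, but it follows a genuinely different route from the paper's. The paper disposes of the lemma in three lines by a trace count: a reflection has trace $n-2$ on $H^2(BT)$, the identity contributes $m-n$ on $H^2(M)$, so by additivity of the trace over the short exact sequence \eqref{exact*} the automorphism $g^*$ has trace $m-2$ on $H^2_T(M)$; since $g^*$ permutes the free basis $\{\tau_i\}$, its trace is the number of fixed indices, which is therefore $m-2$, and a permutation moving exactly two elements must transpose them. You instead attach to each moved index $i$ an explicit element $w_i\in H^2(BT)$ with $\pi^*(w_i)=\tau_{\sigma(i)}-\tau_i$, show (after proving $\sigma^2=\mathrm{id}$ via the distinctness of the normals $v_i$) that every $w_i$ is anti-invariant under $\rhog^*$, and then use the one-dimensionality of the $(-1)$-eigenspace to forbid two disjoint moved pairs, handling $\sigma\neq\mathrm{id}$ separately. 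Each step checks out: the commutation $\iota^*\circ g^*=(g|_M)^*\circ\iota^*$, the coefficient formula $\langle w_i,v_j\rangle=\delta_{j,\sigma(i)}-\delta_{j,i}$, and the linear-independence computation are all valid. What your approach buys is more information: it isolates the vectors $w_i$, which are (up to sign) exactly the elements $\alpha$ with $\pi^*(\alpha)=\tau_i-\tau_j$ that reappear later as the roots of $R(P)$ (compare Lemma~\ref{mug}), so your argument foreshadows the structure of Section~4. What it costs is length and the extra inputs (distinctness of the $v_i$, the involutivity of $\sigma$) that the trace argument never needs.
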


\begin{proof}
Since $\rhog^*$ is a reflection, its trace is $n-2$.  On the other hand, since $g$ induces the identity 
on $H^2(M)$ by assumption, the trace of $g^*$ on $H^2_T(M)$ must be $m-2$ by \eqref{exact*}.  
However, $H^2_T(M)$ is freely generated by $\tau_i$'s over $\Z$ and $g^*$ permutes the generators by \eqref{tperm},  
so the lemma follows. 
\end{proof}

Dualizing the isomorphism \eqref{hom}, we obtain an isomorphism 
\begin{equation} \label{cohom}
\Hom(T,S^1)\cong H^2(BT).
\end{equation}
For $u\in H^2(BT)$, we denote by $\chi^u\in \Hom(T,S^1)$ 
the element corresponding to $u$ through the isomorphism \eqref{cohom}. 

Now we take a compact Lie subgroup $G$ of $\Sympl(M,\omega)$ containing $T$ 
and denote by $\G0$ the identity component of $G$.  As remarked in the Introduction, 
the torus $T$ is a maximal torus of $G$. 

\begin{defi}
A root of $G$ is a non-zero weight of the adjoint representation of $T$ 
on $\g\otimes\C$, where $\g$ denotes the Lie algebra of $G$.  
We think of a root of $G$ as an element of $H^2(BT)$ through the isomorphism 
\eqref{cohom} and denote by $\RG$ the root system of $G$, that is the set of roots of $G$.
Needless to say, $\RG$ depends only on the identity component $\G0$. 
\end{defi}

For $\alpha\in \RG$, we denote by $T_\alpha$ the identity component of the kernel of 
$\chi^\alpha\colon T\to S^1$.   Since $\alpha$ is non-zero, $T_\alpha$ is a codimension 1 
subtorus of $T$.  
Let $\G0_\alpha$ be the identity component of the subgroup of $\G0$ 
which commutes with $T_\alpha$.  
The group $N_{\G0_\alpha}(T)/T$ is of order two and let $g\in N_{\G0_\alpha}(T)$ be 
a representative of the non-trivial element in $N_{\G0_\alpha}(T)/T$. 
The automorphism $\rho_g$ of $T$ 
%(see \eqref{3rhog}) 
is independent of the choice of the representative $g$, so we may denote it by $\rho_\alpha$. 
It is of order two, its fixed point set contains the codimension 1 subtorus $T_\alpha$ 
and $\rho_\alpha^*(\alpha)=-\alpha$.  
We note that $\rho_\alpha^*$ is the Weyl group action associated with $\alpha\in \RG$. 

Similarly, ${\rho_\alpha}_*$ is a reflection on $H_2(BT)$ and we note that
\begin{equation} \label{eqalpha}
\text{$\Fix({\rho_\alpha}_*)=H_2(BT_\alpha)=\Ker\alpha$. }
\end{equation}

\begin{lemm} \label{3}
For $\alpha\in\RG$, there are $i, j\in [m]$ such that 
\begin{equation} \label{eq31}
\langle \alpha,v_i\rangle=-\langle \alpha, v_j\rangle(\not=0)\ 
\text{ and }\ \langle \alpha,v_k\rangle=0 \ \text{for any $k\not=i,j$}.
\end{equation}
\end{lemm}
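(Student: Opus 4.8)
The plan is to exploit Lemma~\ref{W}, which already tells us the combinatorial shape of the permutation $\sigma$ attached to a reflection, and to connect it to the pairing $\langle\alpha,v_i\rangle$ via Lemma~\ref{lemm:rhov}. First I would work with the distinguished representative $g\in N_{\G0_\alpha}(T)$ described just before the statement, so that $\rho_g=\rho_\alpha$ and $\rho_\alpha^*$ is the reflection associated with $\alpha$. The key point I need is that this particular $g$ acts trivially on $H^2(M)$: indeed $g$ lies in $\G0_\alpha$, the identity component of the centralizer of the codimension-one subtorus $T_\alpha$, and since $T$ already acts trivially on $H^*(M)$ (the spectral sequence of \eqref{fibr} collapses, so $H^*_T(M)\to H^*(M)$ is onto and $T$ acts trivially on cohomology), the connected group $\G0_\alpha$ must act trivially on $H^2(M)$ as well because $G$ is connected through $T$. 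With $g$ inducing the identity on $H^2(M)$ and $\rho_g^*=\rho_\alpha^*$ a reflection, Lemma~\ref{W} applies directly.

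Granting that, Lemma~\ref{W} says the permutation $\sigma$ in \eqref{tperm} swaps exactly two indices, say $i$ and $j$, and fixes all others. Now I would feed this into the conclusion of Lemma~\ref{lemm:rhov}, namely $\rho_\alpha{}_*(v_{\sigma(k)})=v_k$ for all $k$. For every $k\neq i,j$ we have $\sigma(k)=k$, so $\rho_\alpha{}_*(v_k)=v_k$, i.e.\ $v_k\in\Fix(\rho_\alpha{}_*)$. By \eqref{eqalpha} this fixed space is exactly $\Ker\alpha=H_2(BT_\alpha)$, which gives $\langle\alpha,v_k\rangle=0$ for all $k\neq i,j$, establishing the second part of \eqref{eq31}. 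For the two swapped indices, $\sigma(i)=j$ and $\sigma(j)=i$, so Lemma~\ref{lemm:rhov} yields $\rho_\alpha{}_*(v_j)=v_i$ and $\rho_\alpha{}_*(v_i)=v_j$. Pairing with $\alpha$ and using that $\rho_\alpha^*(\alpha)=-\alpha$ (equivalently $\langle\alpha,\rho_\alpha{}_*(w)\rangle=\langle\rho_\alpha^*(\alpha),w\rangle=-\langle\alpha,w\rangle$), I get $\langle\alpha,v_i\rangle=\langle\alpha,\rho_\alpha{}_*(v_j)\rangle=-\langle\alpha,v_j\rangle$, which is the equality in \eqref{eq31}.

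It remains to rule out $\langle\alpha,v_i\rangle=0$, i.e.\ to show the common value is nonzero. Here I would argue that if $\langle\alpha,v_i\rangle=\langle\alpha,v_j\rangle=0$ as well, then $\langle\alpha,v_k\rangle=0$ for \emph{all} $k$, whence $\alpha$ pairs to zero against the image of $\pi_*$; but $\pi_*$ is surjective onto $H_2(BT)$ (the exact sequence \eqref{exact}, equivalently \eqref{exactV}), so this forces $\alpha=0$, contradicting that $\alpha$ is a root. This pins down $\langle\alpha,v_i\rangle=-\langle\alpha,v_j\rangle\neq 0$ and completes \eqref{eq31}.

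The main obstacle I anticipate is the first paragraph: cleanly justifying that the chosen representative $g$ acts as the identity on $H^2(M)$. The cleanest route is to observe that $g$ lies in the \emph{connected} group $\G0_\alpha$, and a connected group acts trivially on integral cohomology of a space on which it acts (the action of the identity component is homotopic to the identity), so $g^*=\mathrm{id}$ on $H^*(M)$ automatically — no special structure of $\G0_\alpha$ beyond connectedness is needed. I would state this homotopy-invariance fact explicitly, since it is exactly what unlocks Lemma~\ref{W} and hence the whole argument.
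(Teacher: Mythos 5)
Your proposal is correct and follows essentially the same route as the paper: take the representative $g\in N_{\G0_\alpha}(T)$, note that $g$ lies in a connected group and hence is homotopic to the identity so $g^*=\mathrm{id}$ on $H^2(M)$, apply Lemma~\ref{W} to get the transposition $\sigma$, and then use Lemma~\ref{lemm:rhov}, \eqref{eqalpha} and $\rho_\alpha^*(\alpha)=-\alpha$ exactly as the paper does. Your final paragraph's clean homotopy justification is the one to keep (the spectral-sequence detour in your first paragraph is unnecessary), and your explicit argument for the nonvanishing via surjectivity of $\pi_*$ correctly fills in a detail the paper leaves implicit.
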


\begin{proof}
Let $g\in N_{\G0_\alpha}(T)$ be a representative of the non-trivial element in $N_{\G0_\alpha}(T)/T$. 
Since $g$ is in $\G0$, it is homotopic to the identity so that $g$ induces the identity on $H^2(M)$.  
Moreover, $\rho_g^*=\rho_\alpha^*$ is a reflection as observed above.  Therefore  
there are $i, j\in [m]$ such that 
\[
g^*(\tau_i)=\tau_j,\quad g^*(\tau_j)=\tau_i,\quad g^*(\tau_k)=\tau_k \ \text
{for any $k\not=i,j$}
\]
by Lemma~\ref{W}.  It follows from Lemma~\ref{lemm:rhov} that 
\begin{equation} \label{rhov}
{\rho_\alpha}_*(v_i)=v_j,\quad {\rho_\alpha}_*(v_j)=v_i, \quad 
{\rho_\alpha}_*(v_k)=v_k \ \text{for any $k\not=i,j$}
\end{equation}
and hence $\langle\alpha,v_k\rangle=0$ for $k\not=i,j$ by \eqref{eqalpha}. 
Finally, since $\rho_\alpha^*(\alpha)=-\alpha$, we have
\[
\langle \alpha,v_i\rangle=-\langle \rho_\alpha^*(\alpha),v_i\rangle=-\langle \alpha,
{\rho_\alpha}_*(v_i)\rangle=-\langle\alpha,v_j\rangle,
\]
proving the lemma. 
\end{proof}

\begin{lemm} \label{4}
For $\alpha, \beta\in \RG$ we have  
\begin{equation*}
\rho_\alpha^*(\beta)=\beta-\frac{\langle\beta,v_i\rangle-\langle\beta,v_j\rangle}
{\langle\alpha,v_i\rangle}\alpha=\beta-\frac{\langle\beta,v_j\rangle-\langle\beta,v_i\rangle}
{\langle\alpha,v_j\rangle}\alpha. 
\end{equation*}
\end{lemm}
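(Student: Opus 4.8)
The plan is to exploit the structure already recorded just before Lemma~\ref{3}: that $\rho_\alpha^*$ is a reflection on $H^2(BT)$ with $\rho_\alpha^*(\alpha)=-\alpha$, and that ${\rho_\alpha}_*$ swaps $v_i$ and $v_j$ and fixes the remaining $v_k$. The whole identity will be reduced to pinning down a single scalar, which is then computed by pairing against $v_i$ and $v_j$.

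First I would observe that, since $\rho_\alpha^*$ is an involution, the vector $\beta-\rho_\alpha^*(\beta)$ is negated by $\rho_\alpha^*$ and hence lies in the $(-1)$-eigenspace of $\rho_\alpha^*$. Because $\rho_\alpha^*$ is a reflection this eigenspace is one-dimensional, and since $\rho_\alpha^*(\alpha)=-\alpha$ with $\alpha\neq 0$ it is spanned by $\alpha$. Therefore there is a unique scalar $c=c(\beta)$ with
\[
\rho_\alpha^*(\beta)=\beta-c\,\alpha,
\]
so the problem reduces to computing $c$.

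Next I would determine $c$ by pairing this identity with the homology classes $v_i$ and $v_j$ furnished by Lemma~\ref{3}, using the adjoint relation $\langle\rho_\alpha^*(\beta),v\rangle=\langle\beta,{\rho_\alpha}_*(v)\rangle$ between the induced actions of $\rho_\alpha$ on cohomology and homology (the very relation already invoked in the proof of Lemma~\ref{lemm:rhov}) together with the swap ${\rho_\alpha}_*(v_i)=v_j$, ${\rho_\alpha}_*(v_j)=v_i$ from \eqref{rhov}. Pairing with $v_i$ gives $\langle\beta,v_j\rangle=\langle\beta,v_i\rangle-c\,\langle\alpha,v_i\rangle$, which yields the first expression for $c$; pairing with $v_j$ gives $\langle\beta,v_i\rangle=\langle\beta,v_j\rangle-c\,\langle\alpha,v_j\rangle$, which yields the second. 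The denominators $\langle\alpha,v_i\rangle$ and $\langle\alpha,v_j\rangle$ are nonzero by \eqref{eq31}, so both divisions are legitimate.

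Finally, the equality of the two resulting formulas is forced by $\langle\alpha,v_i\rangle=-\langle\alpha,v_j\rangle$ from \eqref{eq31}. I do not expect a genuine obstacle here: the only point requiring care is to apply the transpose relation between $\rho_\alpha^*$ and ${\rho_\alpha}_*$ in the correct direction, so that the interchange of $v_i$ and $v_j$ produces exactly the sign pattern that makes the two expressions coincide.
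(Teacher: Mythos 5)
Your proof is correct and follows essentially the same route as the paper's: both rest on the transpose relation $\langle\rho_\alpha^*(\beta),v\rangle=\langle\beta,{\rho_\alpha}_*(v)\rangle$ together with the swap \eqref{rhov}, evaluated against $v_i$ and $v_j$. The only cosmetic difference is that you first reduce to a single scalar $c$ via the one-dimensionality of the $(-1)$-eigenspace of the reflection, whereas the paper instead verifies that all three expressions in the statement agree on every $v_\ell$ (including $v_k$ for $k\neq i,j$, where both sides vanish) and concludes because the $v_\ell$ span $H_2(BT)$.
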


\begin{proof}
It follows from \eqref{rhov} that
\[
\begin{split}
\langle \rho_\alpha^*(\beta)-\beta,v_i\rangle&=\langle \beta, {\rho_\alpha}_*(v_i)\rangle -\langle \beta,v_i\rangle=
\langle\beta,v_j\rangle-\langle\beta,v_i\rangle \\
\langle \rho_\alpha^*(\beta)-\beta,v_j\rangle&=
\langle \beta, {\rho_\alpha}_*(v_j)\rangle -\langle \beta,v_j\rangle=
\langle\beta,v_i\rangle-\langle\beta,v_j\rangle\\
\langle \rho_\alpha^*(\beta)-\beta,v_k\rangle&=\langle \beta, {\rho_\alpha}_*(v_k)\rangle -\langle \beta,v_k\rangle
=0\quad\text{for $k\not=i,j$.} 
\end{split}
\]
This together with \eqref{eq31} shows that 
the three terms in the lemma take a same value on each $v_\ell$.  Since $v_\ell$'s span 
$H_2(BT)$, the desired identity in the lemma follows. 
\end{proof}

Let $a_{\beta,\alpha}$ be the constant defined by 
\begin{equation} \label{rhoal}
\rho_\alpha^*(\beta)=\beta-a_{\beta,\alpha}\alpha.
\end{equation}
By Lemma~\ref{4} we have 
\begin{equation}  \label{eq41}
a_{\beta,\alpha}=\frac{\langle\beta,v_i\rangle-\langle\beta,v_j\rangle}
{\langle\alpha,v_i\rangle}=\frac{\langle\beta,v_j\rangle-\langle\beta,v_i\rangle}
{\langle\alpha,v_j\rangle}.
\end{equation}
The following is well-known (see \cite[9.4]{hump70} but $a_{\beta,\alpha}$ is denoted 
$\langle\beta,\alpha\rangle$ in the book): 
\begin{enumerate}
\item $a_{\beta,\alpha}$ is an integer, 
\item $a_{\beta,\alpha}\not=0$ if and only if $a_{\alpha,\beta}\not=0$, 
\item $0\le a_{\beta,\alpha}a_{\alpha,\beta}\le 3$ if $\beta\not=\pm\alpha$
\end{enumerate}
We set 
$$N_\alpha:=|\langle\alpha,v_i\rangle|=|\langle\alpha,v_j\rangle|.
$$ 
We say that $\alpha$ and $\beta$ are \emph{joined} if $\rho_\alpha^*(\beta)\not=\beta$ (i.e. $a_{\beta,\alpha}\not=0$).  
By (2) above, $\rho_\alpha^*(\beta)\not=\beta$ if and only if $\rho^*_\beta(\alpha)\not=\alpha$. 

\begin{lemm} \label{5}
If $\alpha$ and $\beta$ are joined, then $N_\alpha=N_\beta$.  
\end{lemm}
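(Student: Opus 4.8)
The plan is to exploit the characterization of joined roots through the integrality constraints (1)--(3) listed above, combined with the explicit formula \eqref{eq41} for $a_{\beta,\alpha}$. Suppose $\alpha$ and $\beta$ are joined. By Lemma~\ref{3} applied to $\alpha$, there are indices $i,j\in[m]$ with $\langle\alpha,v_i\rangle=-\langle\alpha,v_j\rangle\neq 0$ and $\langle\alpha,v_k\rangle=0$ otherwise, so that $N_\alpha=|\langle\alpha,v_i\rangle|=|\langle\alpha,v_j\rangle|$; similarly there are indices $k,\ell$ attached to $\beta$ with $N_\beta=|\langle\beta,v_k\rangle|$. First I would rewrite \eqref{eq41} so that the numerators are differences of pairings of $\beta$ against the $\alpha$-indices, and symmetrically for $a_{\alpha,\beta}$. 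The key observation is that since $\alpha$ and $\beta$ are joined, $a_{\beta,\alpha}$ and $a_{\alpha,\beta}$ are both nonzero integers by (1) and (2).

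Next I would compute the product $a_{\beta,\alpha}a_{\alpha,\beta}$ explicitly using the two formulas in \eqref{eq41}. The cleanest route is to note that $a_{\beta,\alpha}=(\langle\beta,v_i\rangle-\langle\beta,v_j\rangle)/\langle\alpha,v_i\rangle$ and, by the analogue for $\beta$, $a_{\alpha,\beta}=(\langle\alpha,v_i\rangle-\langle\alpha,v_j\rangle)/\langle\beta,\cdot\rangle$ evaluated at the $\beta$-indices. Because the nonzero pairings of $\alpha$ occur only at $v_i,v_j$ and those of $\beta$ only at its own pair, the products of numerators and denominators should collapse into a ratio involving only $N_\alpha$ and $N_\beta$. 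I expect to reach an identity of the form $a_{\beta,\alpha}/a_{\alpha,\beta}=N_\beta^2/N_\alpha^2$ (up to tracking that the index pairs for $\alpha$ and $\beta$ must in fact coincide when the roots are joined, which itself follows from comparing which $v_k$'s give nonzero pairings). Then the quotient $a_{\beta,\alpha}/a_{\alpha,\beta}$ equals the ratio $N_\beta^2/N_\alpha^2$, and since both $a_{\beta,\alpha}$ and $a_{\alpha,\beta}$ are integers of the same sign with $0<a_{\beta,\alpha}a_{\alpha,\beta}\le 3$, the possible values are tightly constrained.

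The main obstacle I anticipate is organizing the index bookkeeping: I must argue that the pair $\{i,j\}$ attached to $\alpha$ and the pair attached to $\beta$ must overlap (indeed coincide) precisely when the roots are joined, otherwise the numerators in \eqref{eq41} would vanish and the roots would not be joined. Concretely, if $\langle\beta,v_i\rangle=\langle\beta,v_j\rangle$ then $a_{\beta,\alpha}=0$, contradicting joinedness; so $\beta$ distinguishes $v_i$ and $v_j$, forcing at least one of $i,j$ to lie in the support of $\beta$. Combined with the symmetric statement, I expect this to pin down $N_\beta=|\langle\beta,v_i\rangle|=|\langle\beta,v_j\rangle|$ and hence tie the two integers $a_{\beta,\alpha},a_{\alpha,\beta}$ directly to the ratio $N_\alpha/N_\beta$.

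Finally I would close the argument with a rationality-meets-integrality step: from $a_{\beta,\alpha}N_\alpha=a_{\alpha,\beta}N_\beta$ (or whatever exact cross-relation emerges from the two expressions in \eqref{eq41}), together with $a_{\beta,\alpha}a_{\alpha,\beta}>0$ and the bound $a_{\beta,\alpha}a_{\alpha,\beta}\le 3$, I would conclude that the only way two positive integers can satisfy the resulting symmetric relation forcing $N_\alpha^2=N_\beta^2$ is $N_\alpha=N_\beta$. Since $N_\alpha,N_\beta$ are positive by construction, this gives the claim. The delicate point to get right is the sign and the precise algebraic cancellation, but the structural input is simply that joined roots share the same distinguished pair of facet normals, and the type-A rigidity enforced by (1)--(3) does the rest.
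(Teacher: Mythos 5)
Your overall strategy --- express $a_{\beta,\alpha}$ and $a_{\alpha,\beta}$ via \eqref{eq41}, invoke integrality and the bound $0\le a_{\beta,\alpha}a_{\alpha,\beta}\le 3$, and force $N_\alpha=N_\beta$ --- is exactly the paper's, but the support analysis at the heart of your argument is backwards, and the computation as you set it up would fail. You claim that for joined roots the index pair $\{i,j\}$ attached to $\alpha$ and the pair attached to $\beta$ ``must in fact coincide,'' and consequently that $N_\beta=|\langle\beta,v_i\rangle|=|\langle\beta,v_j\rangle|$. If both $\langle\beta,v_i\rangle$ and $\langle\beta,v_j\rangle$ were nonzero, then Lemma~\ref{3} applied to $\beta$ would force the support of $\beta$ to be exactly $\{i,j\}$ with $\langle\beta,v_i\rangle=-\langle\beta,v_j\rangle$; since the $v_\ell$ span $H_2(BT)\otimes\R$, $\beta$ would then be proportional to $\alpha$ and hence equal to $\pm\alpha$ (the root system of a compact Lie group is reduced). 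So for $\beta\ne\pm\alpha$ the two pairs can share at most one index, and joinedness forces them to share exactly one: the correct statement, which the paper records parenthetically in its proof, is that \emph{exactly one} of $\langle\beta,v_i\rangle$, $\langle\beta,v_j\rangle$ vanishes. Your version is not a harmless bookkeeping slip: feeding ``both equal $\pm N_\beta$ with opposite signs'' into \eqref{eq41} gives $|a_{\beta,\alpha}|=2N_\beta/N_\alpha$ and $|a_{\alpha,\beta}|=2N_\alpha/N_\beta$, so $|a_{\beta,\alpha}a_{\alpha,\beta}|=4$, which violates fact (3) for \emph{every} joined pair with $\beta\ne\pm\alpha$ and would ``prove'' that distinct roots are never joined.

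With the corrected support statement your plan closes essentially as in the paper: after reducing to $\beta\ne\pm\alpha$ (the excluded case is trivial since $N_{-\alpha}=N_\alpha$), \eqref{eq41} gives $|a_{\beta,\alpha}|=N_\beta/N_\alpha$ and symmetrically $|a_{\alpha,\beta}|=N_\alpha/N_\beta$. The paper then argues that if $N_\alpha\ne N_\beta$ each of these nonzero integers is different from $1$, hence at least $2$ in absolute value, so the product is at least $4$, contradicting (3). Your ``cross-relation'' route would also work, and even slightly more cleanly: the two expressions multiply to give $|a_{\beta,\alpha}a_{\alpha,\beta}|=1$ exactly, so both factors equal $1$ and $N_\alpha=N_\beta$. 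But either way the argument only goes through once the supports are analyzed correctly, so as written the proposal has a genuine gap at that step.
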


\begin{proof}
Since $N_{-\alpha}=N_\alpha$, we may assume $\beta\not=\pm\alpha$. 
Then $a_{\beta,\alpha}$ is a non-zero integer, so it follows from \eqref{eq41} that 
$|a_{\beta,\alpha}|\ge 2$ if $N_\alpha\not=N_\beta$ (note that since $\beta\not=\pm\alpha$, either 
$\langle \beta,v_i\rangle$ or $\langle\beta,v_j\rangle$ is 0 by Lemma~\ref{3}). 
Changing the role of $\alpha$ and $\beta$, we also have that $|a_{\alpha,\beta}|\ge 2$ 
if $N_\alpha\not=N_\beta$. 
But this contradicts the above fact (3) that 
$0\le a_{\beta,\alpha}a_{\alpha,\beta}\le 3$ if $\beta\not=\pm\alpha$.
\end{proof}

$\RG$ decomposes into a direct sum of irreducible root systems.  
Since we are concerned with the isomorphism type of $\RG$ as a root system,   
we may assume that $N_\alpha=1$ for any $\alpha$ by Lemma~\ref{5}.

\section{The root system of a moment polytope} \label{sect:4}

Remember that the correspondence from symplectic toric manifolds to their moment polytopes 
(which are non-singular) is one-to-one.  
Motivated by the observation made in Section~\ref{sect:3}, we make the following definition.

\begin{defi} For a non-singular polytope $P$ described in  \eqref{P2}, we define  
\[
\begin{split}
\RM:=\{\alpha\in H^2(BT)\mid &
\quad\langle \alpha,v_i\rangle=1,\ \langle \alpha,v_j\rangle=-1\quad \text{for some $i,j$},\\ 
&\qquad\quad\text{and}\quad\langle\alpha,v_k\rangle=0\quad \text{for $k\not=i,j$}\}, 
\end{split}
\]
and call it the \emph{root system} of $P$. (It will be proved below that $\RM$ 
is actually a root system.) 
\end{defi}

\begin{rema}
The root system $\RM$ depends only on the $v_i$'s and not on the constants $a_i$'s used 
to describe the moment polytope $P$ in \eqref{P2}. 
 \end{rema} 

\begin{exam}
We identify $H_2(BT)$ with $\Z^n$ and denote by $\{e_i\}_{i=1}^n$ the standard basis of $\Z^n$ 
and by $\{e_i^*\}_{i=1}^n$ the basis of $(\Z^n)^*$ dual to $\{e_i\}_{i=1}^n$. 
Remember that $m$ is the number of facets of $P$. 

(1) Let $m=n+1$ and take $v_i=e_i$ for $1\le i\le n$ and $v_{n+1}=-\sum_{i=1}^ne_i$.  Then 
\[
\RM=\{  \pm e_i^*\ (1\le i\le n),\quad \pm(e_i^*-e_j^*)\ (1\le i<j\le n)\}
\]
and this is a root system of type $A_n$. Note that the manifold $M$ is the complex projective space $\C P^n$ 
of complex dimension $n$ in this case. 

(2) Let $n=2$, $m=4$ and take $v_1=e_1,\ v_2=e_2,\ v_3=-e_1+ae_2$ and $v_4=-e_2$ where $a$ is 
an arbitrary integer. If $a=0$, then $P$ is a rectangle and 
\[
\RM=\{\pm e_1^*, \ \pm e_2^* \}
\]
which is of type $A_1\times A_1$, and if $a\not=0$, then $P$ is a trapezoid with two right angle corners 
and  
\[
\RM=\{\pm e_1^*\}
\]
which is of type $A_1$.  Note that the manifold $M$ is a Hirzebruch surface in this case.  

(3) If $n=2$ and $m\ge 5$, then one easily checks that $R(P)$ is empty. 
\end{exam}

For $\alpha\in \RM$ with $\langle \alpha,v_i\rangle=1$ and 
$\langle \alpha,v_j\rangle=-1$, we define a reflection $r_\alpha$ on $H_2(BT)$ by
\[
r_\alpha(v):=v-\langle \alpha,v\rangle(v_i-v_j)
\]
which interchanges $v_i$ and $v_j$ and fixes $v_k$'s for $k\not=i,j$, and 
define its dual reflection $r_\alpha^\vee$ on $H^2(BT)$ by
\begin{equation*} \label{eq62}
\langle r_\alpha^\vee(\beta),v\rangle:=\langle\beta,r_\alpha(v)\rangle=
\langle \beta,v-\langle \alpha,v\rangle(v_i-v_j)\rangle. 
\end{equation*}
This shows that 
\begin{equation} \label{eq82}
r_\alpha^\vee(\beta)=\beta-(\langle\beta,v_i\rangle-\langle\beta,v_j\rangle)\alpha.
\end{equation}
In particular, $r_\alpha^\vee(\pm\alpha)=\mp\alpha$. 
One can easily check that $r_\alpha^\vee$ preserves $\RM$. 
Comparing Lemma~\ref{4} with \eqref{eq82} and noting that we may assume $N_\alpha=1$ for 
any $\alpha$, we see that ${\rho_\alpha}^*$ agrees with $r_\alpha^\vee$ and hence we obtain the following. 

\begin{prop} \label{root} 
$\RM$ is a root system and if $P$ is the moment polytope associated with a symplectic toric manifold $(M,\omega)$ and 
$G$ is a compact Lie subgroup of $\Sympl(M,\omega)$ containing the torus $T$, then 
the root system $\RG$ of $G$ is a subsystem of $\RM$. 
\end{prop}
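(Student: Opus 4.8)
The plan is to establish the two assertions in turn: that $\RM$ satisfies the axioms of a reduced crystallographic root system in the subspace of $H^2(BT;\R)$ it spans, and that the root system $\RG$ (after the harmless normalization $N_\alpha=1$) sits inside it as a subsystem. The organizing observation is the one already recorded before \eqref{eq82}: for $\alpha\in\RM$ with $\langle\alpha,v_i\rangle=1$ and $\langle\alpha,v_j\rangle=-1$, the lattice element $\alpha^\vee:=v_i-v_j\in H_2(BT)$ satisfies $\langle\alpha,\alpha^\vee\rangle=2$ and lets us rewrite \eqref{eq82} as $r_\alpha^\vee(\beta)=\beta-\langle\beta,\alpha^\vee\rangle\alpha$. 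Thus $\alpha^\vee$ serves as the coroot of $\alpha$, and the whole verification amounts to checking properties of the pairs $(\alpha,\alpha^\vee)$.

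For the first assertion I would check the axioms directly. Finiteness and $0\notin\RM$ are immediate, since the $v_\ell$ span $H_2(BT)$ and hence any $\alpha\in\RM$ is determined by the ordered pair $(i,j)$ of indices where $\langle\alpha,v_\ell\rangle\neq0$; so $\RM$ is finite and contains no zero. The system is reduced because, for a scalar $c\neq0$, the pairings of $c\alpha$ against $v_i,v_j$ are $c,-c$, which form an admissible $\{+1,-1\}$ pattern only when $c=\pm1$. That each $r_\alpha^\vee$ preserves $\RM$ was already noted: its transpose $r_\alpha$ simply interchanges $v_i,v_j$ and fixes the other $v_k$, so $r_\alpha^\vee$ permutes the defining pairing pattern of any $\beta\in\RM$ into another such pattern (and the resulting element still lies in $(\Z^n)^*$ because its pairings against the lattice generators $v_\ell$ are integers). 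Finally the Cartan integers $\langle\beta,v_i\rangle-\langle\beta,v_j\rangle$ in \eqref{eq82} are integers because $\beta\in(\Z^n)^*$ and $v_i,v_j\in\Z^n$. A finite set stable under such integral reflections is a root system (an invariant inner product is produced by averaging an arbitrary one over the finite group generated by the $r_\alpha^\vee$), which gives the first assertion.

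For the second assertion I would invoke Lemma~\ref{3}, which tells us that every $\alpha\in\RG$ pairs as $\langle\alpha,v_i\rangle=-\langle\alpha,v_j\rangle=\pm N_\alpha$ and vanishes on the remaining $v_k$. After the normalization $N_\alpha=1$ this is exactly the defining condition of $\RM$, so $\RG\subseteq\RM$ as subsets of $H^2(BT)$. To promote this to a containment of root systems I would use the coincidence $\rho_\alpha^*=r_\alpha^\vee$ obtained by comparing Lemma~\ref{4} with \eqref{eq82} when $N_\alpha=1$: the Weyl reflections of $\RG$ are then the restrictions of those of $\RM$, so $\RG$ is closed under them and is a genuine subsystem.

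The step demanding the most care is the normalization $N_\alpha=1$ together with its compatibility with the subsystem claim. One must check that $N_\alpha$ is constant on each irreducible component (any two of its roots are linked by a chain of joined roots, along which $N$ is preserved by Lemma~\ref{5}), that dividing a root by this constant keeps it in the lattice $H^2(BT)=(\Z^n)^*$ (here it is essential that the $v_\ell$ \emph{generate} $\Z^n$, not merely span $\R^n$, so that the values $\pm1,0$ force integrality), and that a uniform rescaling of an irreducible component is an honest isomorphism of root systems. Only after this normalization does the abstract Weyl reflection $\rho_\alpha^*$ literally equal the combinatorial $r_\alpha^\vee$; everything else is a routine matching against the axioms.
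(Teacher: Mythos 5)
Your proposal is correct and follows essentially the same route as the paper: the paper likewise verifies that $R(P)$ is a root system by exhibiting the reflections $r_\alpha^\vee$ (with coroot $v_i-v_j$) preserving $R(P)$ with integral Cartan numbers, and obtains $\RG\subset R(P)$ from Lemma~\ref{3} together with the normalization $N_\alpha=1$ via Lemma~\ref{5} and the identification $\rho_\alpha^*=r_\alpha^\vee$ from comparing Lemma~\ref{4} with \eqref{eq82}. Your extra care about reducedness, integrality of $\alpha/N_\alpha$ (using that the $v_\ell$ generate $\Z^n$), and the averaging construction of an invariant inner product only fills in details the paper leaves implicit (the paper later writes down an explicit invariant form in \eqref{bilinear}).
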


We define a symmetric scalar product $(\ ,\ )$ on $H^2(BT)$ by 
\begin{equation} \label{bilinear}
(\beta,\gamma):=\sum_{\ell=1}^m\langle \beta,v_\ell\rangle\langle\gamma,v_\ell\rangle.
\end{equation}
One easily sees from the definition of $\RM$ that 
\begin{equation}  \label{aa}
\begin{split}
(\alpha,\alpha)&=2 \qquad\text{for $\alpha\in \RM$,}\\
(\alpha,\beta)&=0 \text{ or }\pm 1 \quad\ \text{for $\alpha,\beta\in \RM$ with $\beta\not=\pm\alpha$.}
\end{split}  
\end{equation}
The group generated by the reflections $r_\alpha^\vee$ $(\alpha\in \RM)$ 
is called the \emph{Weyl group} of $\RM$.  

\begin{lemm} \label{winva}
The scalar product $(\ ,\ )$ is invariant under the Weyl group of $\RM$. 
\end{lemm}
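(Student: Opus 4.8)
The plan is to reduce the statement to its generators. The Weyl group of $\RM$ is by definition generated by the reflections $r_\alpha^\vee$ for $\alpha\in\RM$, so an element of the Weyl group is a product of such reflections; since the composition of two product-preserving maps is again product-preserving, it suffices to verify that each single generator preserves the form, i.e.\ that
\[
(r_\alpha^\vee(\beta),r_\alpha^\vee(\gamma))=(\beta,\gamma)
\]
for every $\alpha\in\RM$ and all $\beta,\gamma\in H^2(BT)$.

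First I would unwind the left-hand side using the definition \eqref{bilinear} of the scalar product together with the defining adjunction $\langle r_\alpha^\vee(\beta),v\rangle=\langle\beta,r_\alpha(v)\rangle$ of the dual reflection. This rewrites the product as
\[
\sum_{\ell=1}^m\langle\beta,r_\alpha(v_\ell)\rangle\langle\gamma,r_\alpha(v_\ell)\rangle.
\]
The key step is then to observe that $r_\alpha$ merely permutes the normal vectors: by construction it interchanges $v_i$ and $v_j$ (where $\langle\alpha,v_i\rangle=1$ and $\langle\alpha,v_j\rangle=-1$) and fixes every other $v_k$. Hence $\ell\mapsto r_\alpha(v_\ell)$ induces a bijection of the index set $[m]$ onto itself, and reindexing the sum recovers $\sum_{\ell=1}^m\langle\beta,v_\ell\rangle\langle\gamma,v_\ell\rangle=(\beta,\gamma)$.

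There is essentially no obstacle here: the whole content lies in the fact that $(\ ,\ )$ was defined as an unweighted sum over the full collection of facet normals $v_1,\dots,v_m$, and each $r_\alpha$ acts on this collection as a single transposition. Invariance is therefore immediate once the adjunction is applied, and no case analysis or appeal to the numerics in \eqref{aa} is required.
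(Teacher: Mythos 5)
Your proof is correct, and it takes a genuinely different route from the paper's. The paper verifies $(r_\alpha^\vee(\beta),r_\alpha^\vee(\gamma))=(\beta,\gamma)$ by a direct bilinear expansion: it substitutes the explicit formula $r_\alpha^\vee(\eta)=\eta-(\eta_i-\eta_j)\alpha$ from \eqref{eq82}, then cancels the cross terms using the numerical facts $(\alpha,\alpha)=2$ and $(\alpha,\eta)=\eta_i-\eta_j$ — precisely the data from \eqref{aa} that you claim is unnecessary, and indeed it is. You instead push the reflection through the pairing via the defining adjunction $\langle r_\alpha^\vee(\beta),v\rangle=\langle\beta,r_\alpha(v)\rangle$, so that $(r_\alpha^\vee(\beta),r_\alpha^\vee(\gamma))=\sum_\ell\langle\beta,r_\alpha(v_\ell)\rangle\langle\gamma,r_\alpha(v_\ell)\rangle$, and then observe that $r_\alpha$ acts on $\{v_1,\dots,v_m\}$ as the transposition $v_i\leftrightarrow v_j$, so reindexing by $\sigma=(i\,j)$ finishes the argument. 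This is cleaner and more conceptual: it isolates the real reason for invariance, namely that $(\ ,\ )$ is the pullback of the standard form under $u\mapsto(\langle u,v_1\rangle,\dots,\langle u,v_m\rangle)$ and $r_\alpha^\vee$ is dual to a map permuting the $v_\ell$'s; it would apply verbatim to any element of $\GL(H_2(BT))$ preserving the collection of normal vectors, not just the reflections. The only point worth making explicit is that the reindexing uses $r_\alpha(v_\ell)=v_{\sigma(\ell)}$ for the index transposition $\sigma$, which holds by the construction of $r_\alpha$ in Section~\ref{sect:4}; no separate distinctness argument for the $v_\ell$'s is needed. The reduction to generators at the start is shared with the paper (implicitly) and is unobjectionable.
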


\begin{proof}
It suffices to check that $(r_\alpha^\vee(\beta),r_\alpha^\vee(\gamma))=(\beta,\gamma)$ 
for $\alpha\in \RM$ and $\beta,\gamma\in H^2(BT)$.  By definition there are $i,j\in [m]$ such that 
$\langle\alpha,v_i\rangle=1$, $\langle\alpha,v_j\rangle=-1$ and $\langle \alpha,v_k\rangle=0$ 
for $k\not=i,j$. 
We denote $\langle \eta,v_\ell\rangle$ by $\eta_\ell$ for $\eta\in H^2(BT)$. 
Then since $(\alpha,\eta)=\eta_i-\eta_j$, it follows from \eqref{eq82} that 
\[
\begin{split}
&(r_\alpha^\vee(\beta),r_\alpha^\vee(\gamma))
=(\beta-(\beta_i-\beta_j)\alpha,\gamma-(\gamma_i-\gamma_j)\alpha)\\
=&(\beta,\gamma)-(\beta_i-\beta_j)(\alpha,\gamma)-(\gamma_i-\gamma_j)(\beta,\alpha)+
(\beta_i-\beta_j)(\gamma_i-\gamma_j)(\alpha,\alpha)\\
=&(\beta,\gamma)-(\beta_i-\beta_j)(\gamma_i-\gamma_j)-(\gamma_i-\gamma_j)(\beta_i-\beta_j)
+2(\beta_i-\beta_j)(\gamma_i-\gamma_j)\\
=&(\beta,\gamma),
\end{split}
\]
proving the lemma. 
\end{proof}

For $\alpha,\beta\in \RM$ we define an integer $a_{\beta,\alpha}$ by 
\begin{equation} \label{rab}
r_\alpha^\vee(\beta)=\beta-a_{\beta,\alpha}\alpha.  
\end{equation}
similarly to \eqref{rhoal}.  
If  $\langle\alpha,v_i\rangle=1$ and $\langle\alpha,v_j\rangle=-1$, then    
\begin{equation} \label{eqn:aab}
a_{\beta,\alpha}=\langle\beta,v_i\rangle-\langle\beta,v_j\rangle
\end{equation}
by \eqref{eq82}. 
Another description of $a_{\beta,\alpha}$ is the following. 

\begin{lemm} \label{lemm:symm}
$a_{\beta,\alpha}=(\alpha,\beta)$ for $\alpha,\beta\in \RM$.  
In particular, $a_{\beta,\alpha}=a_{\alpha,\beta}$. 
\end{lemm}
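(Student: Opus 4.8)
The plan is to reduce everything to two formulas already in hand: the explicit expression \eqref{eqn:aab} for $a_{\beta,\alpha}$ in terms of the pairings $\langle\beta,v_i\rangle$ and $\langle\beta,v_j\rangle$, and the definition \eqref{bilinear} of the scalar product. No genuine machinery is needed; the whole statement is a one-line comparison of two sums.

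First I would fix $\alpha\in\RM$ and choose, as the definition of $\RM$ permits, indices $i,j\in[m]$ with $\langle\alpha,v_i\rangle=1$, $\langle\alpha,v_j\rangle=-1$, and $\langle\alpha,v_k\rangle=0$ for all $k\ne i,j$. Then I would evaluate the scalar product directly from \eqref{bilinear}: in $(\alpha,\beta)=\sum_{\ell=1}^m\langle\alpha,v_\ell\rangle\langle\beta,v_\ell\rangle$ every term with $\ell\ne i,j$ vanishes because $\langle\alpha,v_\ell\rangle=0$, so only the $\ell=i$ and $\ell=j$ terms survive and give $(\alpha,\beta)=\langle\beta,v_i\rangle-\langle\beta,v_j\rangle$. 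Comparing this with \eqref{eqn:aab}, which asserts exactly $a_{\beta,\alpha}=\langle\beta,v_i\rangle-\langle\beta,v_j\rangle$, yields $a_{\beta,\alpha}=(\alpha,\beta)$.

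For the final clause I would invoke the symmetry of $(\ ,\ )$, which is manifest from \eqref{bilinear} since the roles of the two arguments are interchangeable. Applying the identity just proved with $\alpha$ and $\beta$ swapped gives $a_{\alpha,\beta}=(\beta,\alpha)=(\alpha,\beta)=a_{\beta,\alpha}$, as claimed. There is essentially no obstacle here; the only point requiring care is that the index pair $\{i,j\}$ depends on $\alpha$ and must be fixed before the sum is collapsed, but once chosen the computation is immediate. The real content of the lemma is conceptual rather than technical: it identifies the Cartan-type integer $a_{\beta,\alpha}$, introduced via the reflection $r_\alpha^\vee$, with the value of the Weyl-invariant scalar product $(\ ,\ )$, and it is precisely this identification that renders the symmetry $a_{\beta,\alpha}=a_{\alpha,\beta}$ transparent.
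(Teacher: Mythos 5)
Your proof is correct, but it takes a different route from the paper's. You compute $(\alpha,\beta)$ directly from the definition \eqref{bilinear}: since $\langle\alpha,v_\ell\rangle$ vanishes for $\ell\ne i,j$, the sum collapses to $\langle\beta,v_i\rangle-\langle\beta,v_j\rangle$, which is exactly the expression \eqref{eqn:aab} for $a_{\beta,\alpha}$; symmetry of the bilinear form then gives $a_{\beta,\alpha}=a_{\alpha,\beta}$ for free. The paper instead runs the standard abstract argument: it invokes the Weyl-invariance of $(\ ,\ )$ (Lemma~\ref{winva}) together with $r_\alpha^\vee(\alpha)=-\alpha$ and $(r_\alpha^\vee)^2=\mathrm{id}$ to deduce $(\alpha,\,r_\alpha^\vee(\beta)+\beta)=0$, and then combines this with $(\alpha,\alpha)=2$ from \eqref{aa} and the definition \eqref{rab} to extract $a_{\beta,\alpha}=(\alpha,\beta)$. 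Your computation is more elementary and bypasses Lemma~\ref{winva} entirely, at the cost of being tied to the specific coordinate description of $\RM$ and of $(\ ,\ )$; the paper's argument is the general fact that for any reflection-invariant form one has $a_{\beta,\alpha}=2(\alpha,\beta)/(\alpha,\alpha)$, which makes the conceptual source of the symmetry (invariance plus the normalization $(\alpha,\alpha)=2$) more visible. Both proofs are complete; your only point requiring care, the dependence of the index pair $\{i,j\}$ on $\alpha$, is handled correctly since the definition of $\RM$ pins down that pair uniquely.
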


\begin{proof}
%The scalar product $(\ ,\ )$ is invariant under the Weyl group, 
Since $r_\alpha^\vee(\alpha)=-\alpha$ and $r_\alpha^\vee$ is of order 2, 
it follows from Lemma~\ref{winva} that 
\[
(\alpha,r_\alpha^\vee(\beta)+\beta)=(r_\alpha^\vee(\alpha),r_\alpha^\vee(r_\alpha^\vee(\beta)+\beta))=
(-\alpha,\beta+r_\alpha^\vee(\beta))
\]
and hence $(\alpha,r_\alpha^\vee(\beta)+\beta)=0$.  This together with 
\eqref{aa} and \eqref{rab} implies the lemma. 
\end{proof}

\begin{theo} \label{typeA}
Any irreducible subsystem of $\RM$ is of type A. 
\end{theo}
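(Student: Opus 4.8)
The plan is to use the standard classification of irreducible root systems via the possible values of $a_{\beta,\alpha}a_{\alpha,\beta}$ together with the special feature of $\RM$ established in Lemma~\ref{lemm:symm}, namely the symmetry $a_{\beta,\alpha}=a_{\alpha,\beta}$. First I would recall that in an arbitrary root system, for two non-proportional roots $\alpha,\beta$ one has $0\le a_{\beta,\alpha}a_{\alpha,\beta}\le 3$, and that the angle and length ratio between $\alpha$ and $\beta$ are governed by the unordered pair $(a_{\beta,\alpha},a_{\alpha,\beta})$. Type $\mathrm B$, $\mathrm C$, $\mathrm F$, and $\mathrm G$ all arise precisely when $a_{\beta,\alpha}a_{\alpha,\beta}\in\{2,3\}$, which forces the two Cartan integers $a_{\beta,\alpha}$ and $a_{\alpha,\beta}$ to be distinct (one of them is $\pm1$ and the other is $\pm2$ or $\pm3$). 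Type $\mathrm A$ and the simply-laced types $\mathrm D,\mathrm E$ are exactly the case in which every such product is $0$ or $1$, i.e.\ the root system is simply-laced.

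The key observation is that Lemma~\ref{lemm:symm} already forbids the non-simply-laced possibilities outright: since $a_{\beta,\alpha}=a_{\alpha,\beta}=(\alpha,\beta)$, the product $a_{\beta,\alpha}a_{\alpha,\beta}=(\alpha,\beta)^2$ can equal $2$ or $3$ only if $(\alpha,\beta)^2\in\{2,3\}$, which is impossible for an integer $(\alpha,\beta)$. Hence $a_{\beta,\alpha}a_{\alpha,\beta}\in\{0,1\}$ for all non-proportional $\alpha,\beta\in\RM$, so $\RM$ is simply-laced and every irreducible factor is of type $\mathrm A$, $\mathrm D$, or $\mathrm E$. This already rules out $\mathrm B,\mathrm C,\mathrm F_4,\mathrm G_2$; I would state this cleanly using \eqref{aa}, which records that $(\alpha,\beta)\in\{0,\pm1\}$ for $\beta\ne\pm\alpha$, making $(\alpha,\beta)^2\in\{0,1\}$ immediate.

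The remaining work is to exclude types $\mathrm D$ and $\mathrm E$, i.e.\ to rule out any irreducible factor whose Dynkin diagram contains a trivalent (branch) node. The plan is to argue that the vectors $v_1,\dots,v_m$ impose a constraint that no root can be "joined" to three mutually orthogonal roots in the way a branch node requires. Concretely, suppose $\beta$ is a root adjacent in the Dynkin diagram to three pairwise non-joined simple roots $\alpha_1,\alpha_2,\alpha_3$. Each $\alpha_s$ has its support on exactly two indices $\{i_s,j_s\}$ where $\langle\alpha_s,v_{i_s}\rangle=1$ and $\langle\alpha_s,v_{j_s}\rangle=-1$, and adjacency means $(\alpha_s,\beta)=\pm1$, forcing $\beta$ to pair nontrivially with the corresponding coordinates. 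I would track, using \eqref{eqn:aab} and the fact that each root is supported on exactly a pair of coordinates, how the supports of $\beta,\alpha_1,\alpha_2,\alpha_3$ must overlap; the pairwise orthogonality of the $\alpha_s$ means their support pairs are essentially disjoint (or overlap in a controlled way), while $\beta$ being joined to all three forces its two-element support to meet all three disjoint pairs, which is a contradiction since a two-element set cannot meet three pairwise-disjoint pairs in the required nonzero pattern.

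The main obstacle will be this last combinatorial step: carefully handling the bookkeeping of supports when the pairs $\{i_s,j_s\}$ are allowed to share an index (the $v_{i_s}$ need not be distinct across different $\alpha_s$), and making sure that "pairwise non-joined" really forces enough disjointness to obstruct a common neighbor $\beta$ supported on only two coordinates. I expect that the cleanest route is to reduce to a rank-$3$ or rank-$4$ subconfiguration realizing the $\mathrm D$-type branch and derive a numerical contradiction directly from the constraint that each $\langle\beta,v_\ell\rangle$ is controlled by \eqref{eq31}, rather than to treat $\mathrm D$ and $\mathrm E$ separately; once the branching is excluded, only the straight-line Dynkin diagrams of type $\mathrm A$ survive among the simply-laced irreducible systems, which completes the proof.
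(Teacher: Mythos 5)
Your proposal is correct in outline, and its first half coincides exactly with the paper's proof: both use Lemma~\ref{lemm:symm} together with \eqref{aa} to see that the Cartan matrix of an irreducible subsystem $\Phi$ is symmetric with diagonal entries $2$, hence $a_{\beta,\alpha}a_{\alpha,\beta}=(\alpha,\beta)^2\in\{0,1\}$ for $\beta\ne\pm\alpha$ and $\Phi$ must be of type A, D or E. Where you genuinely diverge is in killing the trivalent node. The paper takes the branch node $\alpha$ with support $\{i,j\}$ and examines the three neighbors restricted to $v_i,v_j$: since $a_{\,\cdot\,,\alpha}=-1$ forces each neighbor's value pattern on $(v_i,v_j)$ to be $(-1,0)$ or $(0,1)$, two neighbors must agree there, and a short computation then shows those two satisfy $a_{\gamma,\beta}=1>0$, contradicting that distinct simple roots have non-positive Cartan integers. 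You instead examine the branch node restricted to the supports of its three pairwise-orthogonal neighbors and argue that its two-element support cannot meet three disjoint pairs. The step you flag as the ``main obstacle'' --- whether pairwise non-joined really forces disjoint supports --- is in fact immediate from \eqref{bilinear}: if two roots of $R(P)$ share exactly one support index, the inner product is $\pm1\ne0$, and if they share both, the inner product is $\pm2$, which by \eqref{aa} forces the roots to be negatives or equals of each other; so orthogonal roots of $R(P)$ have disjoint supports and your counting argument closes without further case analysis. Both proofs are pigeonhole arguments exploiting that every element of $R(P)$ takes the values $+1,-1$ on exactly two of the $v_\ell$ and $0$ elsewhere; yours localizes the contradiction at the branch node and is arguably the more symmetric formulation, while the paper's localizes it at the neighbors and avoids needing the orthogonality-implies-disjointness observation.
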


\begin{proof}
Let $\Phi$ be an irreducible subsystem of $\RM$. 
The Cartan matrix $C(\Phi)$ of $\Phi$ is $(a_{\beta,\alpha})$ where $\alpha$ and $\beta$ run over 
elements in a basis of $\Phi$.  The diagonal entiries of $C(\Phi)$ are all 2 by \eqref{aa} 
and $C(\Phi)$ is symmetric by Lemma~\ref{lemm:symm}.  Therefore, $\Phi$ must be either 
of type A, D or E (see \cite[p.59]{hump70}). 

Suppose that $\Phi$ is of type D or E.  
Then there are elements $\alpha,\beta,\gamma,\delta$ in the basis of $\Phi$ such that 
\begin{equation} \label{-13}
a_{\beta,\alpha}=a_{\gamma,\alpha}=a_{\delta,\alpha}=-1.
\end{equation}
As before, let $v_i,v_j$ be the elements such that $\langle\alpha,v_i\rangle=1$ and 
$\langle\alpha,v_j\rangle=-1$.  
It follows from \eqref{eqn:aab} and \eqref{-13} that 
the values which $\beta,\gamma,\delta$ take on $v_i$ and $v_j$ must be 
either $(-1,0)$ or $(0,1)$. 
Therefore two of $\beta,\gamma,\delta$, say $\beta$ and $\gamma$, 
must take the same values on $v_i$ and $v_j$, say $(0,1)$. (The same argument 
below will work for $(-1,0)$.) 
Let $v_k$ be the other element on which $\beta$ takes a non-zero value. 
Then $r_\beta(v_j)=v_k$ and since $\langle\beta,v_j\rangle=1$, we have 
$\langle \beta,v_k\rangle=-1$. Moreover, since $\langle \gamma,v_j\rangle=1$ and 
$\beta\not=\gamma$, we have $\langle \gamma,v_k\rangle=0$.  
Therefore 
\[
\begin{split}
\langle r_\beta^\vee(\gamma)-\gamma,v_j\rangle&=\langle \gamma,r_\beta(v_j)\rangle-
\langle \gamma,v_j\rangle=\langle \gamma, v_k\rangle
-\langle\gamma,v_j\rangle =-1 \\
\langle r_\beta^\vee(\gamma)-\gamma,v_k\rangle&=
\langle \gamma,r_\beta(v_k)\rangle-\langle \gamma,v_k\rangle=
\langle \gamma,v_j\rangle-\langle\gamma,v_k\rangle=1.
\end{split}
\]
Since $\beta$ takes 1 on $v_j$ and $-1$ on $v_k$, the above shows that 
$r_\beta^\vee(\gamma)-\gamma=-\beta$
and hence $a_{\gamma,\beta}=1$ by \eqref{rab}.  However $a_{\gamma,\beta}$ must be non-positive because 
$\beta$ and $\gamma$ are in the basis of $\Phi$ and $\beta\not=\gamma$.  This is a contradiction. 
Thus $\Phi$ is neither of type D nor E and hence of type A. 
\end{proof}

We conclude this section with the following corollary which follows from 
Proposition~\ref{root} and Theorem~\ref{typeA}. 

\begin{coro} \label{GtypeA}
%Let $P$ be the moment polytope associated with a symplectic toric manifold $(M,\omega)$. 
If $G$ is a compact Lie subgroup of $\Sympl(M,\omega)$ containing the torus $T$,  
then any irreducible factor of $\RG$ is of type A. 
\end{coro}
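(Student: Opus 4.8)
The plan is to derive the corollary directly from the two substantive results already in hand, since the root system $\RG$ of $G$ is by then completely controlled by $\RM$. First I would invoke Proposition~\ref{root}, which asserts both that $\RM$ is a genuine root system and that, for any compact Lie subgroup $G$ of $\Sympl(M,\omega)$ containing the torus $T$, the root system $\RG$ of $G$ is a subsystem of $\RM$. This is the only step where the geometry of $G$ and the reflection analysis of Section~\ref{sect:3} enter; once the inclusion $\RG\subset\RM$ is known, the remaining argument is purely combinatorial.

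Next I would decompose $\RG$ into a direct sum of irreducible root systems, $\RG=\Phi_1\oplus\cdots\oplus\Phi_r$, which is possible for any root system. The point to observe is that each irreducible factor $\Phi_s$ is itself a subsystem of $\RG$; combining this with $\RG\subset\RM$ from Proposition~\ref{root} and using transitivity of the subsystem relation, each $\Phi_s$ is a subsystem of $\RM$. Thus every $\Phi_s$ is an \emph{irreducible} subsystem of $\RM$.

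Finally I would apply Theorem~\ref{typeA}, which states that every irreducible subsystem of $\RM$ is of type A. Applied to each factor $\Phi_s$, this shows that every irreducible factor of $\RG$ is of type A, which is exactly the assertion of the corollary.

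I do not expect any real obstacle here: both substantive inputs (the inclusion $\RG\subset\RM$, and the type-A classification of irreducible subsystems of $\RM$) are already established, so the corollary is essentially bookkeeping. The only point requiring a moment's care is the transitivity step, namely that an irreducible factor of the subsystem $\RG$ is again a subsystem of $\RM$; but this is immediate from the definition of a subsystem, since a subsystem of a subsystem is a subsystem.
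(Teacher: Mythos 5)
Your proposal is correct and follows exactly the route the paper intends: the corollary is stated as an immediate consequence of Proposition~\ref{root} (the inclusion $\RG\subset\RM$) and Theorem~\ref{typeA} (every irreducible subsystem of $\RM$ is of type A), with the irreducible factors of $\RG$ being irreducible subsystems of $\RM$. Your explicit transitivity remark is the only bookkeeping the paper leaves implicit.
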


\section{Connected maximal compact Lie subgroup of $\Sympl(M,\omega)$} \label{sect:5}

In this section we shall observe that the equality $\RG=\RM$ is attained 
for some compact \emph{connected} Lie subgroup $G$ of $\Sympl(M,\omega)$.  

As discussed in Section~\ref{sect:1}, we may think of $M$ as $\mathcal{Z}_P/\Ker\V$ and 
$\omega$ as the form induced from the standard form $\omega_0$ on $\C^m$, where 
\begin{equation} \label{5ZP2}
\mathcal{Z}_P=\{ z\in\C^m\mid \sum_{i=1}^m(\frac{1}{2}|z_i|^2+a_i)\mu_i=0\}
\end{equation}
from \eqref{ZP2} and 
\begin{equation} \label{5KerV}
\Ker\V=\{(g_1,\dots,g_m)\in (S^1)^m\mid \prod_{i=1}^mg_i^{\langle u,v_i\rangle}=1 \text{ for $\forall u
\in H^2(BT)$}\}
\end{equation}
from \eqref{KerV} through the identification $H^2(BT)=\Z^n$ discussed in Section~\ref{sect:2}.  

\begin{lemm} \label{mug}
Let $\alpha$ be an element of $\RM$ such that $\langle \alpha,v_i\rangle=1$, $\langle\alpha,v_j\rangle=-1$ and 
$\langle\alpha,v_k\rangle=0$ for $k\not=i,j$.  Then 
$\mu_i=\mu_j$ in \eqref{5ZP2} and $g_i=g_j$ in \eqref{5KerV}.
\end{lemm}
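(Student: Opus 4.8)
The plan is to read off both equalities by substituting the single element $u=\alpha$ into the defining relations already established in the excerpt. Recall that $\alpha\in\RM$ is by definition an element of $H^2(BT)$, so it is a legitimate input to every relation below; the whole lemma amounts to exploiting the special pattern $\langle\alpha,v_i\rangle=1$, $\langle\alpha,v_j\rangle=-1$, $\langle\alpha,v_k\rangle=0$ $(k\not=i,j)$.

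For the first assertion, I would recall that the classes $\mu_k\in H^2(M)$ appearing in \eqref{5ZP2} are precisely those of Proposition~\ref{DaJu}, and in particular they satisfy the linear relations of type (2) there, namely
\[
\sum_{k=1}^m\langle u,v_k\rangle\mu_k=0\qquad\text{for every }u\in H^2(BT).
\]
Taking $u=\alpha$ and using the hypothesis on the pairings $\langle\alpha,v_k\rangle$, every term with $k\not=i,j$ drops out and the relation collapses to $\mu_i-\mu_j=0$, i.e.\ $\mu_i=\mu_j$ in $H^2(M)$, which is the identity needed in \eqref{5ZP2}.

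For the second assertion, I would start from the description of $\Ker\V$ in \eqref{5KerV}: an element $(g_1,\dots,g_m)\in(S^1)^m$ lies in $\Ker\V$ exactly when $\prod_{k=1}^m g_k^{\langle u,v_k\rangle}=1$ for all $u\in H^2(BT)$. Again specializing to $u=\alpha$, the product reduces to $g_i^{1}g_j^{-1}=g_ig_j^{-1}=1$, so $g_i=g_j$, as claimed.

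I do not expect a genuine obstacle here: both parts are one-line specializations of relations that are already in hand (Proposition~\ref{DaJu} for the cohomological relation and \eqref{5KerV} for $\Ker\V$), and the role of the lemma is simply to record, for use in Section~\ref{sect:5}, that each root $\alpha\in\RM$ forces the corresponding pair of coordinates to coincide on $\mathcal Z_P$ and on $\Ker\V$. The only point worth stating explicitly is that $\alpha$ may legitimately be fed into both relations because $\RM\subset H^2(BT)$ by definition, after which nothing further is required.
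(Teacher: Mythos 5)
Your proof is correct and follows essentially the same route as the paper: the second assertion is verbatim the paper's argument (specialize $u=\alpha$ in \eqref{5KerV}), and for the first, your appeal to the relation $\sum_k\langle u,v_k\rangle\mu_k=0$ from Proposition~\ref{DaJu}(2) is just the paper's computation $\iota^*(\pi^*(\alpha))=\mu_i-\mu_j=0$ stated in packaged form. No gaps.
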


\begin{proof}
It follows from \eqref{pi*} that $\pi^*(\alpha)=\tau_i-\tau_j$.  Applying $\iota^*$ to the both sides of this 
identity, we get the former identity in the lemma because $\iota^*\circ\pi^*=0$ by \eqref{exact*} 
and $\mu_\ell=\iota^*(\tau_\ell)$ by \eqref{mu}.  
If we take the $\alpha$ as $u$ in \eqref{5KerV}, 
then the condition $\prod_{i=1}^mg_i^{\langle u,v_i\rangle}=1$ reduces to $g_ig_j^{-1}=1$ and this proves the 
latter statement in the lemma. 
\end{proof}

The purpose of this section is to prove the following. 

\begin{prop} \label{mconn}
There is a closed connected subgroup $\tilde G$ of the unitary group $\U(m)$ which leaves $\mathcal{Z}_P$ invariant 
and contains $\Ker\V$ in its center (so that the action of $\tilde G$ induces an effective action of 
$\tilde G/\Ker\V$ on $M=\mathcal{Z}_P/\Ker\V$) and $\Delta(\tilde G/\Ker\V)=\RM$.
\end{prop}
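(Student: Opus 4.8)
The proposition asks for a closed connected subgroup $\tilde G \subset \U(m)$ that (a) leaves $\mathcal{Z}_P$ invariant, (b) contains $\Ker\V$ in its center, and (c) has $\Delta(\tilde G/\Ker\V) = \RM$. The torus $(S^1)^m$ already does (a) and (b) but is abelian, so it only gives the empty root system. The point is to enlarge $(S^1)^m$ by adjoining, for each root $\alpha \in \RM$, the extra symplectomorphisms that realize the reflection $r_\alpha^\vee$.

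The plan is to build $\tilde G$ concretely inside $\U(m)$ from the combinatorics of $\RM$.

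**Building the candidate group.** The key geometric input is Lemma~\ref{mug}: if $\alpha \in \RM$ pairs to $1$ with $v_i$, to $-1$ with $v_j$, and to $0$ with all other $v_k$, then on $\mathcal{Z}_P$ we have $\mu_i = \mu_j$ in the defining relation \eqref{5ZP2}, so the constants $a_i, a_j$ enter \eqref{5ZP2} only through the symmetric combination $\tfrac12(|z_i|^2 + |z_j|^2)$. This means the full unitary group $\U(2)$ acting on the coordinate pair $(z_i, z_j)$ (and fixing all other coordinates) leaves $\mathcal{Z}_P$ invariant, because such a transformation preserves $|z_i|^2 + |z_j|^2$. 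My plan is therefore to let $\tilde G$ be the subgroup of $\U(m)$ generated by the full coordinate torus $(S^1)^m$ together with these $\U(2)$-blocks, one for each root $\alpha \in \RM$. Equivalently, partition $[m]$ according to the equivalence relation generated by ``$i \sim j$ whenever some $\alpha\in\RM$ pairs to $\pm1$ with $v_i, v_j$''; on each block of size $r$ one takes the full $\U(r)$ acting on those coordinates, and $\tilde G$ is the product of these unitary blocks. Since each $\U(r)$ preserves $\sum_{\ell \in \text{block}} |z_\ell|^2$ and the $\mu_\ell$ agree within a block by Lemma~\ref{mug}, this $\tilde G$ preserves \eqref{5ZP2}, hence leaves $\mathcal{Z}_P$ invariant; it is closed and connected as a product of unitary groups. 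The coordinate torus $(S^1)^m$ is the standard maximal torus of this product, and $\Ker\V$ sits inside it; because $g_i = g_j$ for coordinates in the same block (Lemma~\ref{mug}), every element of $\Ker\V$ is a scalar on each block, hence central in $\tilde G$.

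**Identifying the root system.** The remaining work is the root computation $\Delta(\tilde G/\Ker\V) = \RM$. The maximal torus of $\tilde G$ is $(S^1)^m$; passing to $\tilde G/\Ker\V$ replaces it by $T = (S^1)^m/\Ker\V$, which by \eqref{Vident} is the correct maximal torus. The roots of a product $\prod_b \U(r_b)$, in the coordinates $e_\ell^*$ of $(S^1)^m$, are exactly the $e_i^* - e_j^*$ for $i \neq j$ in a common block. Under the dual surjection $\iota^* \colon (\Z^m)^* \to H^2(M)$ and the identification of \eqref{cohom}, I must show these descend to precisely the elements $\alpha \in \RM$. The forward inclusion is immediate: if $i, j$ are joined by some $\alpha \in \RM$ then $\pi^*(\alpha) = \tau_i - \tau_j = e_i^* - e_j^*$ (as in the proof of Lemma~\ref{mug}), so $e_i^* - e_j^*$ pulls back to a root $\alpha$ of $\tilde G/\Ker\V$ lying in $\RM$.

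**The expected obstacle.** The delicate direction is the reverse inclusion: I must verify that every root of $\tilde G/\Ker\V$ actually lies in $\RM$, i.e. that grouping coordinates into full $\U(r)$-blocks does not manufacture roots $e_i^* - e_j^*$ that fail to come from an element of $\RM$. The concern is a block of size $r \geq 3$: there $\U(r)$ has roots $e_i^* - e_j^*$ for \emph{all} pairs in the block, and I need each of these to descend to a genuine element $\alpha \in H^2(BT)$ satisfying the defining conditions of $\RM$. This is really a transitivity/compatibility statement about $\RM$ — that within a block the roots join up into a single irreducible type-$A$ subsystem, so that any two coordinates in a block are directly joined by a root. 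I expect to prove this by combining Theorem~\ref{typeA} (every irreducible subsystem of $\RM$ is type $A$, whose positive roots are exactly the $e_i^* - e_j^*$) with the block definition, showing the block coordinates are in bijection with the nodes of a type-$A_{r-1}$ subsystem. Once that bijection is established, the roots of the $\U(r)$-block match the roots of $A_{r-1} \subset \RM$ exactly, and taking the product over blocks gives $\Delta(\tilde G/\Ker\V) = \RM$. Checking this block-by-block correspondence rigorously, rather than the unitary-invariance of $\mathcal{Z}_P$ (which is routine), is where the argument's substance lies.
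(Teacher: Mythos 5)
Your construction is essentially identical to the paper's: the paper also uses Lemma~\ref{mug} to attach to each irreducible factor $\Phi$ of $\RM$ (of rank $r-1$) a subset $I(\Phi)\subset[m]$ of size $r$ on which the $\mu_\ell$ and the $\Ker\V$-coordinates coincide, takes the block unitary group $\U(\Phi)$ on those coordinates together with the diagonal torus $(S^1)^m$, and concludes invariance of $\mathcal{Z}_P$, centrality of $\Ker\V$, and $\Delta(\tilde G/\Ker\V)=\RM$. The ``reverse inclusion'' you flag as the delicate point is exactly what the paper dispatches with ``the root system of $\U(\Phi)$ is (isomorphic to) $\Phi$ \dots\ by construction,'' and your proposed resolution via the type-A structure from Theorem~\ref{typeA} is the correct (and standard) justification.
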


\begin{proof}
Let $\Phi$ be an irreducible factor of $\RM$.  It is of type A by Theorem~\ref{typeA}. 
Suppose that the rank of $\Phi$ is $r-1$. Then it follows from Lemma~\ref{mug} that 
there is a subset $I(\Phi):=\{i_1,\dots,i_{r}\}$ of $[m]$ such that 
$\mu_{i_1}=\dots=\mu_{i_{r}}$ and $g_{i_1}=\dots=g_{i_r}$ for $g=(g_1,\dots,g_m)\in\Ker\V$. 
Therefore the action of $\U(m)$ on $\C^m$ restricted to the subgroup  
\[
\U(\Phi):=\{(x_{ij})\in \U(m)\mid x_{ij}=\delta_{ij} \ \text{unless both $i$ and $j$ are in $I(\Phi)$}\}, 
\]
where $\delta_{ij}=1$ if $i=j$ and $0$ otherwise, leaves $\mathcal{Z}_P$ invariant and $\U(\Phi)$ 
commutes with $\Ker\V$.  
We note that the root system of $\U(\Phi)$ is (isomorphic to) $\Phi$. 

Now we decompose $\RM$ into sum of irreducible factors $\Phi_1,\dots,\Phi_s$.  Then the subsets $I(\Phi_1),\dots,I(\Phi_s)$ 
of $[m]$ are disjoint. 
We consider the subgroup $\tilde G$ of $\U(m)$ generated by $\prod_{i=1}^s\U(\Phi_i)$ and (the diagonal subgroup) $(S^1)^m$.  
Since $(S^1)^m$ contains $\Ker\V$, so does $\tilde G$.  
It follows from the observation above that $\tilde G$ commutes with $\Ker\V$ and the action of $\U(m)$ on $\C^m$ 
restricted to $\tilde G$ leaves $\mathcal{Z}_P$ invariant so that the action descends to an effective action of $\tilde G/\Ker\V$ on $M$. 
Since the action of $\tilde G$ on $\mathcal{Z}_P$ preserves the standard form $\omega_0|_{\mathcal{Z}_P}$, 
the induced action of $\tilde G/\Ker\V$ on $M$ preserves the form $\omega$ on $M$, see \eqref{omega0}.  
Finally, $\Delta(\tilde G/\Ker\V)=\RM$ by construction.  
\end{proof}

\section{Automorphisms of a moment polytope} \label{sect:6}

Since the dual of the Lie algebra of $T$ can be naturally identified with 
$H^2(BT;\R)$,  we think that the moment map $\mu$ associated 
with $(M,\omega)$ takes values in $H^2(BT;\R)$ and $P=\mu(M)$. 
When $g\in \Sympl(M,\omega)$ 
normalizes the torus $T$, we associated the $\rhog$ of $\Aut(T)$ to $g$ in Section~\ref{sect:3}, 
where 
\begin{equation} \label{defrhog}
\text{$\rhog(t)=gtg^{-1}$\quad for $t\in T$}
\end{equation} 
and $\Aut(T)$ denotes the group of automorphisms of $T$. 
The moment map associated to $(M,\omega)$ with the $T$-action twisted by $\rhog$ is 
given by $\rhog^*\circ\mu$, so the image of $M$ by the map is $\rhog^*(P)$. 
Since $g$ preserves the form $\omega$, the images of $M$ by $\mu$ and $\rhog^*\circ\mu$
are congruent modulo parallel translations in $H^2(BT;\R)$. 
Motivated by this observation, we define 
\[
\Aut(P):=\{\rho\in \Aut(T)\mid \rho^*(P)\equiv P\}
\]
where $\equiv$ denotes congruence modulo parallel translations in $H^2(BT;\R)$. 

\begin{rema} \label{rema2}
As remarked before, the root system $\RM$ depends only on the $v_i$'s and not on 
the constants $a_i$'s used to define the moment polytope $P$ in \eqref{P} or \eqref{P2}.  
However $\Aut(P)$ actually depends on the $a_i$'s.  
For instance, $\Aut(P)$ for a square $P$ is (a dihedral group) of order 8 while $\Aut(P)$ 
for a (non-square) rectangle $P$ is of order 4.
%although their $v_i$'s are same.  
\end{rema}

The correspondence $g\to \rhog$ defines a homomorphism 
\begin{equation} \label{Phi}
\rhoP\colon \NGT\to \Aut(P)
\end{equation}
where $G$ is any subgroup of $\Sympl(M,\omega)$ containing $T$ (e.g. $G$ may be 
the entire group $\Sympl(M,\omega)$) and $\NGT$ denotes the normalizer of $T$ in $G$. 
If $g\in T$, then $\rhog$ is the identity; so $T$ is in the kernel of $\rhoP$. 

\begin{lemm} \label{lemm2}
If $G$ is a compact Lie subgroup of $\Sympl(M,\omega)$ containing the torus $T$, then 
the kernel of $\rhoP$ is exactly $T$.
\end{lemm}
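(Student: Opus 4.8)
The plan is to prove the nontrivial inclusion, namely that every $g\in\NGT$ with $\rhog=\mathrm{id}$ lies in $T$; the reverse inclusion $T\subseteq\Ker\rhoP$ was already observed. So fix such a $g$, which is precisely an element of $\NGT$ centralizing $T$. First I would extract the consequence for the permutation $\sigma$ of \eqref{tperm}: since $\rhog=\mathrm{id}$ we have $\rhog_*=\mathrm{id}$ on $H_2(BT)$, so Lemma~\ref{lemm:rhov} gives $v_{\sigma(i)}=v_i$ for every $i$. As the $v_i$ are pairwise distinct, this forces $\sigma=\mathrm{id}$, so $g$ maps each characteristic submanifold $M_i$ to itself and hence preserves every intersection $\cap_{i\in I}M_i$. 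In particular $g$ fixes each $T$-fixed point of $M$, these being exactly the isolated points $\cap_{i\in I}M_i$ indexed by the vertices of $P$.

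Next I would analyze $g$ near such a fixed point $p$. Because $P$ is non-singular, the $n$ facet normals meeting at the corresponding vertex form a $\Z$-basis of $H_2(BT)$, so the weights of the isotropy $T$-representation on $T_pM$ are $n$ pairwise distinct characters forming a $\Z$-basis of $H^2(BT)\cong\Hom(T,S^1)$; in particular the representation is faithful with one-dimensional weight spaces $L_1,\dots,L_n$. Since $g$ commutes with $T$ and fixes $p$, the differential $dg_p$ commutes with the $T$-action on $T_pM$; having distinct weights, it must preserve each $L_k$ and act there as a unit scalar $c_k\in S^1$ (here we use that $g$ is symplectic, so $dg_p$ is unitary for a compatible complex structure). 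Because the weights form a basis of $\Hom(T,S^1)$, there is a unique $t_0\in T$ acting on $L_k$ by multiplication by $c_k$, so that $d(t_0)_p=dg_p$. Setting $h:=t_0^{-1}g\in G$, I obtain a symplectomorphism that commutes with $T$, fixes $p$, and has $dh_p=\mathrm{id}$.

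It then remains to deduce $h=\mathrm{id}$, which gives $g=t_0\in T$ and finishes the proof. Since $G$ is compact, the closure of the group generated by $h$ is a compact group, so by averaging an invariant Riemannian metric and passing to the associated exponential chart (Bochner linearization) $h$ is linear near $p$; as $dh_p=\mathrm{id}$ this makes $h$ the identity on a neighborhood of $p$. To globalize, I would consider the set $U$ of points admitting a neighborhood on which $h$ is the identity: it is open by definition, nonempty since $p\in U$, and closed, because at any limit point $x$ one has $h(x)=x$ and $dh_x=\mathrm{id}$ by continuity, whereupon the same linearization argument places a whole neighborhood of $x$ in $U$. Connectedness of $M$ then yields $U=M$, i.e.\ $h=\mathrm{id}$. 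I expect this last propagation step to be the main obstacle: passing from the vanishing of the $1$-jet of $h$ at a single point to $h=\mathrm{id}$ on all of $M$ genuinely requires the compactness of the closed group generated by $h$ together with the connectedness of $M$, since for a general symplectomorphism a trivial $1$-jet at a point does not force triviality.
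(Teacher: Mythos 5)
Your proof is correct and follows essentially the same route as the paper's: show that $g$ fixes each $T$-fixed point $x$ and acts on the weight spaces of $\tau_xM$ by unit complex scalars, absorb these into an element $t_0\in T$ using the fact that the weights at $x$ form a basis (the Delzant condition), and then propagate $d(t_0^{-1}g)_x=\mathrm{id}$ to $t_0^{-1}g=\mathrm{id}$ using compactness of $G$ and connectedness of $M$. The only cosmetic differences are that the paper realizes the weight spaces as $\tau_xM/\tau_xM_i$ and concludes by noting that the fixed-point set of $gt_0^{-1}$ is a closed submanifold whose component through $x$ has codimension $0$, whereas you use Bochner linearization together with an open--closed argument; these are the same idea.
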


\begin{proof} 
We note that $g\in \NGT$ permutes the characteristic submanifolds $M_i$'s of $M$.   
Suppose that $g\in \NGT$ is in the kernel of $\rhoP$.  
Then $g$ maps $M_i$ to itself for each $i$.  Let $x$ be a $T$-fixed point in $M$.  
Then $x=\bigcap_{i\in I}M_i$ for some $I\in [m]$ with cardinality $n$, so that $x$ is fixed by $g$.  
We decompose the tangent space $\tau_xM$ of $M$ at $x$ into 
\[
\tau_xM=\bigoplus_{i\in I}\tau_xM/\tau_xM_i. 
\]
The differential 
$dg\colon \tau_xM\to \tau_xM$ preserves each real 2-dimensional eigenspace $\tau_xM/\tau_xM_i$ since 
$g$ fixes $x$ and maps $M_i$ to itself for each $i$.  The symplectic form $\omega$ determines an orientation on $\tau_xM/\tau_xM_i$ 
for each $i$ and $dg$ preserves the orientation on $\tau_xM/\tau_xM_i$ since $g$ preserves the form $\omega$.  

Since $G$ is compact, there exists a  $G$-invariant Riemannian metric on $M$ 
so that we may assume that $dg$ is an orthogonal transformation on $\tau_xM/\tau_xM_i$ but 
since $dg$ preserves the orientation on it, $dg$ on $\tau_xM/\tau_xM_i$ is a rotation and hence 
there exists $t\in T$ 
such that $dg=dt$, i.e. $d(gt^{-1})$ is the identity on $\tau_xM$.  
On the other hand, since $gt^{-1}$ is contained in $G$ and $G$ is compact, 
the fixed point set of $gt^{-1}$ is a closed submanifold of $M$.  
The connected component of this submanifold containing $x$ is of codimension 0 because 
$d(gt^{-1})$ is the identity on $\tau_xM$.  
Since $M$ is connected, the connected component must agree with $M$ and 
this means $g=t$, proving the lemma. 
\end{proof}

\begin{coro} \label{G/G0}
Let $G$ be a compact Lie subgroup of $\Sympl(M,\omega)$ containing the torus $T$ and let $\G0$ be 
the identity component of $G$.  Then 
\[
G/\G0\cong \rhoP(\NGT)/\rhoP(\NG0T)\subset \Aut(P)/\rhoP(\NG0T)
\]
where $\rhoP$ is the map in \eqref{Phi} and $\NG0T$ is the normalizer of $T$ in $\G0$.  
\end{coro}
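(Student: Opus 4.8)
The plan is to factor the desired isomorphism through the normalizer quotient $\NGT/\NG0T$ and then transport everything across $\rhoP$ using Lemma~\ref{lemm2}. First I would establish the purely group-theoretic isomorphism
\[
G/\G0\cong \NGT/\NG0T.
\]
The natural map $\NGT\to G/\G0$ has kernel $\NGT\cap\G0=\NG0T$, which gives injectivity of the induced homomorphism $\NGT/\NG0T\to G/\G0$. For surjectivity, given $g\in G$, conjugation by $g$ carries the maximal torus $T$ of $\G0$ to another maximal torus $gTg^{-1}$ of $\G0$ (here I use that $T$ is maximal in $G$, hence in $\G0$, as recalled in the Introduction); since maximal tori of the compact connected group $\G0$ are all conjugate in $\G0$, there is $h\in\G0$ with $(hg)T(hg)^{-1}=T$, so $hg\in\NGT$ represents the same component $g\G0$. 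Hence the map is onto.

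Next I would invoke Lemma~\ref{lemm2}, which says the kernel of $\rhoP$ on $\NGT$ is exactly $T$. Thus $\rhoP$ induces an isomorphism $\NGT/T\cong\rhoP(\NGT)$ onto its image in $\Aut(P)$, and, since $T\subset\NG0T$, restricting to $\NG0T$ gives an isomorphism $\NG0T/T\cong\rhoP(\NG0T)$ under which the subgroup $\NG0T/T\subset\NGT/T$ corresponds to $\rhoP(\NG0T)\subset\rhoP(\NGT)$. Because $\G0\trianglelefteq G$, one checks directly that $\NG0T\trianglelefteq\NGT$, so $\rhoP(\NG0T)$ is normal in $\rhoP(\NGT)$ and the quotients below are legitimate groups. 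The third isomorphism theorem then yields
\[
\rhoP(\NGT)/\rhoP(\NG0T)\cong(\NGT/T)\big/(\NG0T/T)\cong\NGT/\NG0T.
\]
Combining this with the first step gives $\rhoP(\NGT)/\rhoP(\NG0T)\cong G/\G0$, and the asserted inclusion into $\Aut(P)/\rhoP(\NG0T)$ is immediate from $\rhoP(\NGT)\subset\Aut(P)$.

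The only substantive input is the classical fact that every connected component of a compact Lie group meets the normalizer of a fixed maximal torus, i.e.\ the surjectivity in the first step, which rests on the conjugacy of maximal tori in $\G0$; all the rest is bookkeeping with Lemma~\ref{lemm2} and the isomorphism theorems. Accordingly I do not expect a genuine obstacle, only the need to apply maximal-torus conjugacy in $\G0$ correctly and to verify the normality $\NG0T\trianglelefteq\NGT$ so that the final quotient is well defined.
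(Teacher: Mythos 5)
Your proposal is correct and follows essentially the same route as the paper: the paper's proof consists precisely of the isomorphism $\NGT/\NG0T\cong G/\G0$ (via conjugacy of maximal tori in the compact group) combined with Lemma~\ref{lemm2}, and you have simply filled in the bookkeeping (kernel $\NGT\cap\G0=\NG0T$, the induced isomorphisms modulo $T=\Ker\rhoP$, and the third isomorphism theorem) that the paper leaves implicit. No gaps.
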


\begin{proof} 
Since $T$ is a maximal torus of $G$ and maximal tori in $G$ are conjugate to each other because 
$G$ is compact, the inclusion $\NGT\to G$ induces an isomorphism 
\begin{equation} \label{NGT}
\NGT/\NG0T\cong G/\G0.
\end{equation}
This fact together with Lemm~\ref{lemm2} implies the corollary.
\end{proof}

We shall construct a cross section of the homomorphism $\rhoP$ in \eqref{Phi} 
when $G=\Sympl(M,\omega)$. 
We recall the description \eqref{P2} of $P$:
\begin{equation} \label{6P}
P=\{u\in H^2(BT;\R)\mid \langle u,v_i\rangle\ge a_i\quad(i=1,\dots,m)\}. 
\end{equation}
 Let $\rho\in \Aut(P)$.  Since $\rho^*(P)\equiv P$, we have 
\[
\rho^*(P)=P+u_0 \quad\text{for some $u_0\in H^2(BT;\R)$}.
\]
On the other hand, we have 
\begin{equation*}  
\rho^*(P)=\{ \rho^*(u)\in H^2(BT;\R)\mid \langle u,v_i\rangle \ge a_i\quad(i=1,\dots,m)\}
\end{equation*}
by definition and this can be rewritten as  
\begin{equation}  \label{rho*P}
\rho^*(P)=\{ u\in H^2(BT;\R)\mid \langle u,{\rho_*}^{-1}(v_i)\rangle \ge a_i\quad(i=1,\dots,m)\}.
\end{equation}
Since $\rho^*(P)\equiv P$, it follows from \eqref{6P} and 
\eqref{rho*P} that there exists a permutation $\sigma$ on $[m]$ such that 
\begin{equation} \label{rho*-1}
\text{${\rho_*}^{-1}(v_i)=v_{\sigma(i)}$ for any $i\in[m]$} 
\end{equation}
so that 
\begin{equation*}
\rho^*(P)=\{ u\in H^2(BT;\R)\mid \langle u,v_{\sigma(i)}\rangle \ge a_i\quad(i=1,\dots,m)\}.
\end{equation*}
Therefore 
\[
\begin{split}
 \langle u,v_{\sigma(i)}\rangle\ge a_{\sigma(i)} &\Longleftrightarrow u\in P \\
&\Longleftrightarrow u+u_0\in \rho^*(P)\\
&\Longleftrightarrow  \langle u+u_0,v_{\sigma(i)}\rangle \ge a_i\\
&\Longleftrightarrow \langle u,v_{\sigma(i)}\rangle \ge a_i-\langle u_0,v_{\sigma(i)}\rangle
\end{split}
\]
and this shows that 
\begin{equation} \label{asi}
a_{\sigma(i)}=a_i-\langle u_0,v_{\sigma(i)}\rangle \quad\text{for any $i$}.
\end{equation}

We make one more observation on the permutation $\sigma$. 

\begin{lemm} \label{barf}
There is a ring automorphism of $H^*(M)$ sending $\mu_i$ to $\mu_{\sigma(i)}$ for each $i$. 
\end{lemm}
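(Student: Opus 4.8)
The plan is to work entirely with the presentation of $H^*(M)$ given in Proposition~\ref{DaJu}: the ring $H^*(M)$ is the quotient of $\Z[\mu_1,\dots,\mu_m]$ by the ideal $\mathcal{I}$ generated by the two families
\begin{enumerate}
\item $\prod_{i\in I}\mu_i$ for $I\subset[m]$ with $\cap_{i\in I}M_i=\emptyset$, and
\item $\sum_{i=1}^m\langle u,v_i\rangle\mu_i$ for $u\in H^2(BT)$.
\end{enumerate}
Since $\sigma$ is a permutation, the assignment $\mu_i\mapsto\mu_{\sigma(i)}$ is a ring automorphism of the polynomial ring $\Z[\mu_1,\dots,\mu_m]$; the whole problem reduces to showing that this automorphism carries $\mathcal{I}$ into itself, for then it descends to an endomorphism of $H^*(M)$, and applying the same argument to $\sigma^{-1}$ shows that this descended map is invertible, hence an automorphism.

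\textbf{Type (2) relations.} This is the easy family and I would dispatch it first by direct computation. Rewriting \eqref{rho*-1} as $\rho_*(v_{\sigma(i)})=v_i$ and reindexing, one gets $\rho_*(v_j)=v_{\sigma^{-1}(j)}$, so using the adjointness $\langle\rho^*(u),v\rangle=\langle u,\rho_*(v)\rangle$ already employed in \eqref{comp2} I would compute
\[
\sum_{i=1}^m\langle u,v_i\rangle\mu_{\sigma(i)}=\sum_{j=1}^m\langle u,v_{\sigma^{-1}(j)}\rangle\mu_j=\sum_{j=1}^m\langle\rho^*(u),v_j\rangle\mu_j.
\]
Thus the image of the type (2) generator attached to $u$ is the type (2) generator attached to $\rho^*(u)$; since $\rho^*$ is an automorphism of $H^2(BT)$, this family is permuted bijectively among itself.

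\textbf{Type (1) relations.} This is the main obstacle, because here I must use that $\sigma$ arises from a genuine polytope automorphism and not merely an arbitrary permutation of the facet normals. The key geometric input is the standard Delzant-construction fact that $\cap_{i\in I}M_i\neq\emptyset$ if and only if the corresponding facets meet, i.e. $\cap_{i\in I}P_i\neq\emptyset$. I would then argue that $\sigma$ preserves this incidence pattern as follows. Starting from $\rho^*(P)=P+u_0$ and the computation $\langle\rho^*(u),v_{\sigma(i)}\rangle=\langle u,v_i\rangle$ (again from $\rho_*(v_{\sigma(i)})=v_i$), one checks that $\rho^*$ sends the facet $P_i$ of $P$ to the facet $P_{\sigma(i)}+u_0$ of $P+u_0$. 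Hence $\rho^*$, followed by translation by $-u_0$, is an affine automorphism of $P$ that realizes $\sigma$ on the set of facets; being an affine automorphism it preserves the entire face lattice, so $\cap_{i\in I}P_i$ is a face of $P$ exactly when $\cap_{i\in I}P_{\sigma(i)}$ is. Combined with the toric fact above, this gives $\cap_{i\in I}M_i=\emptyset\iff\cap_{j\in\sigma(I)}M_j=\emptyset$, which is precisely what is needed for the type (1) generators to be permuted among themselves.

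\textbf{Conclusion.} With both generating families shown to be preserved, the ideal $\mathcal{I}$ is invariant under $\mu_i\mapsto\mu_{\sigma(i)}$, and the induced ring automorphism of $H^*(M)$ is the desired one. I expect the type (2) verification and the conclusion to be routine; the real content is the identification of $\sigma$ with the facet-permutation of the affine polytope automorphism $\rho^*$, which is what guarantees the empty-intersection pattern is respected.
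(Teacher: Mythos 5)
Your proposal is correct and follows essentially the same route as the paper: both arguments rest on (i) the fact that $\sigma$ comes from the polytope automorphism $\rho$ and hence preserves the empty-intersection pattern of the facets (equivalently of the $M_i$'s), and (ii) the computation, via $\rho_*(v_{\sigma(i)})=v_i$, that the linear relations $\sum_i\langle u,v_i\rangle\mu_i$ are permuted according to $u\mapsto\rho^*(u)$. The only cosmetic difference is that the paper first builds the automorphism $\tau_i\mapsto\tau_{\sigma(i)}$ on $H^*_T(M)$ using Lemma~\ref{ring} and then descends it through Proposition~\ref{DaJu}, whereas you work directly with the quotient presentation of $H^*(M)$; the verifications required are identical.
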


\begin{proof}
Since $\sigma$ is induced from the automorphism $\rho$ of $P$, we note that 
$\cap_{i\in I}P_i=\emptyset$  if and only if $\cap_{j\in \sigma(I)}P_j=\emptyset$ for $I\subset [m]$. 
Furthermore we note that $\cap_{i\in I}P_i=\emptyset$ if and only if $\cap_{i\in I}M_i=\emptyset$.  
Therefore, Lemma~\ref{ring} ensures that 
sending $\tau_i$ to $\tau_{\sigma(i)}$ for each $i$ induces a ring automorphism of $H^*_T(M)$, 
which we denote by $f$.  

Applying $f$ to the both sides of \eqref{pi*}, we have   
\begin{equation} \label{fpi}
f(\pi^*(u))=\sum_{i=1}^m\langle u,v_i\rangle f(\tau_i)=\sum_{i=1}^m \langle u,v_i\rangle\tau_{\sigma(i)},
\end{equation}
while it follows from \eqref{pi*} applied to $\rho^*(u)$ instead of $u$ that we have 
\begin{equation} \label{pirho}
\begin{split}
\pi^*(\rho^*(u))&=\sum_{i=1}^m \langle \rho^*(u),v_i\rangle\tau_i 
=\sum_{i=1}^m \langle \rho^*(u),v_{\sigma(i)}\rangle\tau_{\sigma(i)}\\ 
&=\sum_{i=1}^m \langle u,\rho_*(v_{\sigma(i)})\rangle\tau_{\sigma(i)}
=\sum_{i=1}^m \langle u,v_{i}\rangle\tau_{\sigma(i)}
\end{split}
\end{equation}
where we used \eqref{rho*-1} at the last identity. 
Comparing \eqref{fpi} with \eqref{pirho}, we obtain the identity $f(\pi^*(u))=\pi^*(\rho^*(u))$ for any $u\in H^2(BT)$ 
and this shows that the ring automorphism $f$ of $H^*_T(M)$ preserves the subalgebra 
$\pi^*(H^*(BT))$.  Therefore $f$ induces a ring automorphism $\bar f$ of $H^*(M)$ by 
Proposition~\ref{DaJu}.  Since $f(\tau_i)=\tau_{\sigma(i)}$ and $\mu_i=\iota^*(\tau_i)$ by \eqref{mu}, 
we have $\bar f(\mu_i)=\mu_{\sigma(i)}$ which proves the lemma. 
\end{proof}

We now define the unitary transformation $F_\rho$ of $\C^m$ by 
\begin{equation} \label{Frho}
F_\rho(z_1,\dots,z_m):=(z_{\sigma(1)},\dots,z_{\sigma(m)}).
\end{equation} 
It preserves $\mathcal{Z}_P$ because 
\[
\begin{split}
z\in\mathcal{Z}_P &\Longleftrightarrow \sum(\frac{1}{2}|z_{\sigma(i)}|^2+a_{\sigma(i)})\mu_{\sigma(i)}=0 
\quad(\text{by \eqref{ZP2}})\\
&\Longleftrightarrow \sum(\frac{1}{2}|z_{\sigma(i)}|^2+a_{i}-\langle u_0,v_{\sigma(i)}\rangle)\mu_{\sigma(i)}=0
\quad(\text{by \eqref{asi}})\\
&\Longleftrightarrow \sum(\frac{1}{2}|z_{\sigma(i)}|^2+a_{i})\mu_{\sigma(i)}=0\quad(\text{by (2) in Proposition~\ref{DaJu}})\\
&\Longleftrightarrow \sum(\frac{1}{2}|z_{\sigma(i)}|^2+a_{i})\mu_{i}=0\quad(\text{by Lemma~\ref{barf}})\\
&\Longleftrightarrow F_\rho(z)\in\mathcal{Z}_P\quad(\text{by \eqref{ZP2} and \eqref{Frho}}).
\end{split}
\]

Let $\phi$ be the automorphism of $(S^1)^m$ defined by 
\begin{equation} \label{eqphi}
\phi(g_1,\dots,g_m):=(g_{\sigma(1)},\dots,g_{\sigma(m)}).
\end{equation}
%Remember that $\mathcal{Z}_P$ is invariant under the standard action of $(S^1)^n$ on $\C^m$.   
Then the map $F_\rho$ is $\phi$-equivariant.

\begin{lemm} \label{phi}
Let $\V\colon (S^1)^m\to T$ be the homomorphism in \eqref{Vlambda}.  
Then $\mathcal V\circ \phi=\rho\circ\mathcal V$.  In particular $\phi$ preserves $\Ker \mathcal V$.
\end{lemm}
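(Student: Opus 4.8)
The plan is to verify the identity $\V\circ\phi=\rho\circ\V$ directly on an arbitrary element $(g_1,\dots,g_m)\in(S^1)^m$, the only inputs being the multiplicativity of $\rho$, the naturality of the identification \eqref{hom}, and the relation \eqref{rho*-1}.

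First I would record the naturality of \eqref{hom}: for any $\rho\in\Aut(T)$ and any $v\in H_2(BT)$ one has $\rho\circ\lambda_v=\lambda_{\rho_*(v)}$. This is immediate from the definition of the isomorphism $f\mapsto(Bf)_*(\kappa)$, since $B(\rho\circ\lambda_v)=B\rho\circ B\lambda_v$ gives $(B(\rho\circ\lambda_v))_*(\kappa)=\rho_*\big((B\lambda_v)_*(\kappa)\big)=\rho_*(v)$, where $\rho_*=(B\rho)_*$ denotes the induced map on $H_2(BT)$.

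Next, using \eqref{Vlambda} together with the fact that $\rho$ is a group homomorphism, I would compute
\[
\rho\big(\V(g_1,\dots,g_m)\big)=\prod_{i=1}^m\rho\big(\lambda_{v_i}(g_i)\big)=\prod_{i=1}^m\lambda_{\rho_*(v_i)}(g_i),
\]
while from \eqref{eqphi} and \eqref{Vlambda},
\[
\V\big(\phi(g_1,\dots,g_m)\big)=\V(g_{\sigma(1)},\dots,g_{\sigma(m)})=\prod_{i=1}^m\lambda_{v_i}(g_{\sigma(i)})=\prod_{j=1}^m\lambda_{v_{\sigma^{-1}(j)}}(g_j),
\]
the last equality being the reindexing $j=\sigma(i)$. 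Comparing the two products term by term, the desired identity reduces to $\rho_*(v_j)=v_{\sigma^{-1}(j)}$ for every $j$. Applying $\rho_*$ to \eqref{rho*-1} gives $v_i=\rho_*(v_{\sigma(i)})$, and setting $j=\sigma(i)$ turns this into exactly $\rho_*(v_j)=v_{\sigma^{-1}(j)}$, so the two displays agree and $\V\circ\phi=\rho\circ\V$.

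Finally, the ``in particular'' clause follows formally: since $\rho$ is an automorphism, $\rho(1)=1$, so $g\in\Ker\V$ forces $\V(\phi(g))=\rho(\V(g))=1$, i.e. $\phi(g)\in\Ker\V$; as $\phi$ is a bijection of $(S^1)^m$, it preserves $\Ker\V$. The only point requiring any care is the index bookkeeping between $\sigma$ and $\sigma^{-1}$ and the correct direction of \eqref{rho*-1}, which I expect to be the sole (and minor) obstacle.
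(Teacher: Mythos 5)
Your proof is correct and follows essentially the same route as the paper: both verify the identity pointwise using the naturality relation $\rho\circ\lambda_v=\lambda_{\rho_*(v)}$ together with \eqref{rho*-1}, the only cosmetic difference being that you compute the two sides separately and match them after reindexing, whereas the paper runs a single chain of equalities. Your explicit justification of $\rho\circ\lambda_v=\lambda_{\rho_*(v)}$ via $B(\rho\circ\lambda_v)=B\rho\circ B\lambda_v$ is a welcome addition the paper leaves implicit.
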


\begin{proof}
Noting that $\rho(\lambda_v(g))=\lambda_{\rho_*(v)}(g)$, we see 
from \eqref{eqphi}, \eqref{Vlambda} and \eqref{rho*-1} that 
\[
\begin{split}
\mathcal V\big(\phi & (g_1,\dots,g_m)\big)
=\mathcal V(g_{\sigma(1)},\dots,g_{\sigma(m)})
=\prod_{i=1}^m \lambda_{v_i}(g_{\sigma(i)})\\
&=\prod_{i=1}^m \lambda_{\rho_*(v_{\sigma(i)})}(g_{\sigma(i)})
=\rho\big(\prod_{i=1}^m \lambda_{v_{\sigma(i)}}(g_{\sigma(i)})\big)\\
&=\rho\big(\prod_{i=1}^m \lambda_{v_{i}}(g_{i})\big)=\rho\big(\mathcal V(g_1,\dots,g_m)\big).
\end{split}
\]
This proves the lemma. 
\end{proof}

Since $M=\mathcal{Z}_P/\Ker\V$ and $F_\rho$ is $\phi$-equivariant, 
$F_\rho$ induces a diffeomorphism $\bar F_\rho$ of $M$ by Lemma~\ref{phi}. 
By definition $F_\rho$ is a unitary transformation on $\C^m$, so 
$\bar{F}_\rho$ preserves the symplectic form $\omega$ and hence 
$\bar{F}_\rho\in \Sympl(M,\omega)$. 

Finally we need to prove the following. 

\begin{lemm} \label{barFrho}
$\bar{F}_\rho$ normalizes $T$ and $\rhoP(\bar{F}_\rho)=\rho$. 
\end{lemm}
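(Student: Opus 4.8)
The plan is to reduce everything to the $\phi$-equivariance of $F_\rho$ together with Lemma~\ref{phi}, by a single conjugation computation carried out on the quotient $M=\mathcal{Z}_P/\Ker\V$. Recall that under the identification \eqref{Vident} the torus $T=(S^1)^m/\Ker\V$ acts on $M$ by $\V(g)\cdot[z]=[g\,z]$ for $g\in(S^1)^m$ and $z\in\mathcal{Z}_P$, where $[\,\cdot\,]$ denotes the class in the relevant quotient, and by construction $\bar{F}_\rho([z])=[F_\rho(z)]$. So the whole statement will follow once I understand how conjugation by $\bar{F}_\rho$ acts on the $T$-action.

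First I would fix an arbitrary $g\in(S^1)^m$ and compute the conjugate $\bar{F}_\rho\circ\V(g)\circ\bar{F}_\rho^{-1}$ as a diffeomorphism of $M$. Unwinding the quotient maps one has $\bar{F}_\rho^{-1}([z])=[F_\rho^{-1}(z)]$, hence $(\bar{F}_\rho\circ\V(g)\circ\bar{F}_\rho^{-1})([z])=[F_\rho(g\,F_\rho^{-1}(z))]$. The $\phi$-equivariance of $F_\rho$ established just before the lemma rewrites the bracketed expression as $[\phi(g)\,z]=\V(\phi(g))\cdot[z]$. Therefore $\bar{F}_\rho\circ\V(g)\circ\bar{F}_\rho^{-1}=\V(\phi(g))$ as transformations of $M$.

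Next I would invoke Lemma~\ref{phi}, which gives $\V(\phi(g))=\rho(\V(g))$, so the previous identity reads $\bar{F}_\rho\circ\V(g)\circ\bar{F}_\rho^{-1}=\rho(\V(g))$. In particular the conjugate of the torus element $\V(g)$ again lies in $T$, and since $\V$ is surjective this holds for every $t\in T$, yielding $\bar{F}_\rho\,t\,\bar{F}_\rho^{-1}=\rho(t)$. This is precisely the assertion that $\bar{F}_\rho$ normalizes $T$ and that the associated automorphism $\rho_{\bar{F}_\rho}$ from \eqref{defrhog} equals $\rho$, so $\rhoP(\bar{F}_\rho)=\rho$ by the definition of $\rhoP$ in \eqref{Phi}.

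The argument is essentially bookkeeping, so there is no genuine obstacle once the $\phi$-equivariance and Lemma~\ref{phi} are available; the only points requiring care are the direction of the equivariance relation and the handling of the two nested quotients, namely checking that conjugation by $\bar{F}_\rho$ of the $T$-action really corresponds to pre- and post-composing with $F_\rho^{\mp1}$ on $\mathcal{Z}_P$, and that the output $\V(\phi(g))$ is manifestly an element of $T$ (rather than merely some symplectomorphism), which is exactly what makes the normalization claim, and not just membership in $\Sympl(M,\omega)$, come out.
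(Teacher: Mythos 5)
Your proposal is correct and follows essentially the same route as the paper: establish $\bar{F}_\rho\circ\V(g)\circ\bar{F}_\rho^{-1}=\V(\phi(g))$ from the $\phi$-equivariance of $F_\rho$, then identify $\V(\phi(g))$ with $\rho(\V(g))$. The only difference is cosmetic: you cite Lemma~\ref{phi} for the last step, whereas the paper re-runs that computation inline.
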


\begin{proof}
We view an element $g=(g_1,\dots,g_m)$ of $(S^1)^m$ as a diffeomorphism of $\mathcal{Z}_P\subset \C^m$. 
Then $F_\rho\circ g\circ F_\rho^{-1}=\phi(g)$. This identity decends to an identity 
\begin{equation} \label{norma}
\bar{F}_\rho\circ \V(g)\circ \bar{F}_\rho^{-1}=\V(\phi(g)) \quad\text{in $\Sympl(M,\omega)$}.
\end{equation}
Since $T=\V((S^1)^m)$, the identity \eqref{norma} shows that $\bar{F}_\rho$ normalizes $T$. 

Let $t\in T$. Then 
\begin{equation} \label{tVg}
\text{$t=\V(g)=\prod_{i=1}^m\lambda_{v_i}(g_i)$\quad  for some $g\in (S^1)^m$}
\end{equation} 
where \eqref{Vlambda} is used for the latter identity.  
Using \eqref{tVg} together with the definition of $\rhoP$ (see also \eqref{defrhog}), 
\eqref{norma}, \eqref{eqphi} and \eqref{rho*-1}, we have   
\[
\begin{split}
(\rhoP(&\bar{F}_\rho))(t)=\bar{F}_\rho\circ t\circ \bar{F}_\rho^{-1}=\bar{F}_\rho\circ \V(g)\circ \bar{F}_\rho^{-1}=\V(\phi(g))\\
&=\V(g_{\sigma(1)},\dots,g_{\sigma(m)})=\prod_{i=1}^m\lambda_{v_i}(g_{\sigma(i)})=\prod_{i=1}^m\lambda_{\rho_*(v_{\sigma}(i))}(g_{\sigma(i)})\\
&=\rho(\prod_{i=1}^m\lambda_{v_{\sigma(i)}}(g_{\sigma(i)}))=\rho(\prod_{i=1}^m\lambda_{v_{i}}(g_{i}))=\rho(\V(g))=\rho(t)
\end{split}
\]
and this proves the latter statement in the lemma. 
\end{proof}

\section{Maximal compact Lie subgroup of $\Sympl(M,\omega)$} \label{sect:7}

The purpose of this section is to prove the following. 

\begin{theo} \label{Gmax}
If a compact Lie subgroup $G$ of $\Sympl(M,\omega)$ containing the torus $T$ satisfies 
the following two conditions:
\begin{enumerate}
\item $\RG=\RM$, and
\item the map $\rhoP$ in \eqref{Phi} is surjective,
\end{enumerate}
then $G$ is maximal among compact Lie subgroups of $\Sympl(M,\omega)$ containing the torus $T$. 
Moreover, there is a compact Lie subgroup $G_{\max}$ of $\Sympl(M,\omega)$ which satisfies the 
conditions (1) and (2) above.  
\end{theo}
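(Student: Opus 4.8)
The plan is to prove Theorem~\ref{Gmax} in two parts, mirroring the two assertions. For the maximality statement, I would argue by contradiction: suppose $G$ satisfies (1) and (2) but is properly contained in a compact Lie subgroup $G'$ of $\Sympl(M,\omega)$ that also contains $T$. Since $T$ is a maximal torus of both $G$ and $G'$ (as explained in the Introduction, $\dim T = \tfrac12\dim M$ forces this), the root systems and normalizer structure of $G$ and $G'$ are the right invariants to compare. By Proposition~\ref{root} applied to $G'$ we have $\Delta(G')\subset \RM = \Delta(G)$, and the reverse inclusion $\Delta(G)\subset\Delta(G')$ holds since $G\subset G'$; hence $\Delta(G')=\Delta(G)=\RM$. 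This means $G$ and $G'$ have the same identity component on the level of roots, so the semisimple parts of $(G')^0$ and $\G0$ coincide, and since both contain the same maximal torus $T$, their identity components agree: $(G')^0=\G0$.

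The remaining task is then to rule out $G'$ having strictly more connected components than $G$. Here the homomorphism $\rhoP$ does the work. By Corollary~\ref{G/G0} applied to both groups, I would compare the images of the normalizers under $\rhoP$. Since $(G')^0=\G0$, we have $\rhoP(N_{(G')^0}(T))=\rhoP(\NG0T)$. Condition (2) says $\rhoP\colon \NGT\to\Aut(P)$ is surjective, so $\rhoP(N_{G'}(T))\subset\Aut(P)=\rhoP(\NGT)$, giving $\rhoP(N_{G'}(T))=\rhoP(\NGT)$. Combining with Corollary~\ref{G/G0}, the natural inclusion $G/\G0\to G'/(G')^0$ becomes an isomorphism, so $G$ and $G'$ have the same number of connected components; together with $(G')^0=\G0\subset G\subset G'$ this forces $G=G'$, the desired contradiction. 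The key leverage is that $\rhoP$ has kernel exactly $T$ by Lemma~\ref{lemm2}, so the component groups inject faithfully into $\Aut(P)$ and surjectivity of $\rhoP$ saturates that target.

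For the existence of $G_{\max}$, I would combine the two constructions already available. Proposition~\ref{mconn} supplies a connected compact subgroup $\tilde G/\Ker\V$ of $\Sympl(M,\omega)$ realizing $\Delta(\tilde G/\Ker\V)=\RM$, settling condition (1). For condition (2), the cross-section built in Section~\ref{sect:6} provides, for each $\rho\in\Aut(P)$, an element $\bar F_\rho\in\Sympl(M,\omega)$ normalizing $T$ with $\rhoP(\bar F_\rho)=\rho$ by Lemma~\ref{barFrho}. The idea is to let $G_{\max}$ be the subgroup generated by the connected group from Proposition~\ref{mconn} together with all the $\bar F_\rho$'s; since $\Aut(P)$ is finite, only finitely many generators are needed, and one checks this group is compact (it is contained in a product of the connected part with a finite set of coset representatives) and that $\rhoP$ restricted to its normalizer of $T$ surjects onto $\Aut(P)$.

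The main obstacle I anticipate is the existence half, specifically verifying that the generated group $G_{\max}$ is genuinely compact and that adjoining the finitely many $\bar F_\rho$ does not enlarge the identity component or the root system beyond $\RM$. One must confirm that the $\bar F_\rho$ normalize the connected group coming from Proposition~\ref{mconn} (so that $G_{\max}$ is a finite extension of that connected group rather than something with larger identity component) and that the group they generate with $T$ still has $T$ as a maximal torus. Checking these normalization compatibilities at the level of the $\U(m)$-model on $\mathcal Z_P$, using that $F_\rho$ permutes coordinates according to $\sigma$ and that the blocks $\U(\Phi_k)$ are permuted among themselves, is where the care is needed; the root-system conditions on $\Aut(P)$ guarantee $\sigma$ preserves the partition $\{I(\Phi_k)\}$, which is the fact that makes everything fit together.
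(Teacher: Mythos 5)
Your proposal is correct and follows essentially the same route as the paper: compare root systems via Proposition~\ref{root} to force $(G')^0=\G0$, then use surjectivity of $\rhoP$ together with $\Ker\rhoP=T$ (Lemma~\ref{lemm2}) and the isomorphism $\NGT/\NG0T\cong G/\G0$ to conclude $G=G'$, and build $G_{\max}$ from the group $\tilde G$ of Proposition~\ref{mconn} together with the $F_\rho$'s. The normalization check you flag (that $\sigma$ permutes the blocks $I(\Phi_k)$, so the $F_\rho$ normalize $\tilde G$ and the identity component of the generated group remains $\tilde G$) is precisely the one point the paper leaves implicit, and your reasoning for why it holds is sound.
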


\begin{proof}
What we prove for the former part of the theorem 
is that if a compact Lie subgroup $H$ of $\Sympl(M,\omega)$ contains the $G$ in the theorem, then $H=G$.  

Since $G$ is a subgroup of $H$, the root system $\RG$ of $G$ is a subsystem of the root system $\Delta(H)$ of $H$. 
On the other hand, since $H$ is a compact Lie subgroup of $\Sympl(M,\omega)$ containing $T$, 
$\Delta(H)$ is a subsystem of $\RM$.  Therefore, it follows from the condition (1) in the theorem 
that $\RG=\Delta(H)$ and this shows that $\G0=H^0$ where the superscript $0$ denotes the identity 
components as before.   

Since $G$ is a subgroup of $H$, $\NGT$ is a subgroup of $N_H(T)$.  It follows from Lemma~\ref{lemm2} 
and the condition (2) in the theorem that $\NGT=N_H(T)$ and this together with the isomorphism 
\eqref{NGT} for $G$ and $H$ implies $G=H$ because $\G0=H^0$.  

The proof of the latter part of the theorem is as follows. 
Let $\tilde G_{\max}$ be the subgroup   of $\U(m)$ generated by 
$\tilde G$ in Proposition~\ref{mconn} and $F_\rho$'s in \eqref{Frho} (regarded as elements of $\U(m)$) 
for $\rho\in\Aut(P)$.  The identity component of $\tilde G_{\max}$ is $\tilde G$. 
The action of $\tilde G_{\max}$ on $\C^m$ leaves $\mathcal{Z}_P$ invariant and 
induces an effective action of $G_{\max}:=\tilde G_{\max}/\Ker\V$  
on $M$ preserving $\omega$.  The group $G_{\max}$ contains the torus $T$ and 
satisfies the two conditions in the theorem by Proposition~\ref{mconn} and Lemma~\ref{barFrho}. 
\end{proof}

\bigskip

\noindent
{\bf Acknowledgment.}  
I would like to thank Nigel Ray for his invitation to University of Manchester in summer of 2008 
and stimulating discussions.  This work is an outcome of a project with him.  
I also would like to thank Michael Wiemeler for explaining his work \cite{wiem08}  
which motivated the introduction of the root system $R(P)$. Finally I would like to thank 
Megumi Harada and Shintaro Kuroki for their helpful comments on an earlier version of the paper. 

\bigskip

\end{document}